\newtheorem{condition**}{A*}
\newtheorem{condition***}{C*}
\newtheorem{condition*}{C}
\newtheorem{prop}{Propsition}[section]
\newtheorem{definition}{Definition}[section]
\newtheorem{theorem}{Theorem}[section]
\newtheorem{lemma}{Lemma}[section]
\newtheorem{remark}{Remark}[section]
\newenvironment{keywords}{{\bf Key words: }}{}
\begin{document}

\title{A Class of Mean-field LQG Games with Partial Information}

\author{Jianhui Huang\thanks{Corresponding Author. J. Huang is with the Department of Applied Mathematics, The Hong Kong Polytechnic University, Hong Kong
(majhuang@polyu.edu.hk).}
\quad \quad Shujun Wang \thanks{S. Wang is with the Department of Applied Mathematics, The Hong Kong Polytechnic University, Hong Kong
(shujun.wang@connect.polyu.hk). The authors acknowledge the financial support from RGC Earmarked grant 500909.}}

\maketitle

\begin{abstract}
The large-population system consists of considerable small agents whose individual behavior and mass effect are interrelated via their state-average. The mean-field game provides an efficient way to get the decentralized strategies of large-population system when studying its dynamic optimizations. Unlike other large-population literature, this current paper possesses the following distinctive features. First, our setting includes the partial information structure of large-population system which is practical from real application standpoint. Specially, two cases of partial information structure are considered here: the partial filtration case (see Section 2, 3) where the available information to agents is the filtration generated by an observable component of underlying Brownian motion; the noisy observation case (Section 4) where the individual agent can access an additive white-noise observation on its own state. Also, it is new in filtering modeling that our sensor function may depend on the state-average. Second, in both cases, the limiting state-averages become random and the filtering equations to individual state should be formalized to get the decentralized strategies. Moreover, it is also new that the limit average of state filters should be analyzed here. This makes our analysis very different to the full information arguments of large-population system. Third, the consistency conditions are equivalent to the wellposedness of some Riccati equations, and do not involve the fixed-point analysis as in other mean-field games. The $\epsilon$-Nash equilibrium properties are also presented.
\end{abstract}

\begin{keywords}Consistency condition, $\epsilon$-Nash equilibrium, Large-population system, Mean-field games, Noisy observation, Partial information.
\end{keywords}\\

\section{Introduction}

The starting point of our work is the large-population systems which are strongly grounded in various fields. The most significant feature of large-population systems is the existence of a large number of negligible agents that are coupled in their dynamics and (or) cost functionals via state average. Due to this highly complicated coupling feature, it is intractable for the agents to study the centralized strategies of large-population system. Instead, it is more appropriate to discuss the decentralized strategies which encompass the individual state and some off-line quantities only. For this purpose, one efficient methodology is the associated mean-field games which enable us to obtain the decentralized control, in its general nonlinear setting, through some Hamilton-Jacobi-Bellman (HJB) equation coupled with Fokker-Planck (FP) equation. The interested readers may refer the pioneering work \cite{ll} for the motivation and methodology of mean-field games. Based on \cite{ll}, considerable research attention has been drawn along this research line. Some recent literature include \cite{B12,B11,dh,hcm07,HCM12,hmc06,LZ08} for linear-quadratic-Gaussian (LQG) mean-field games of large-population system, \cite{H10,NH12} for large population systems with major and minor players, \cite{TZB11} for risk sensitive mean-field games. In addition, the stochastic control problems with a mean-field term in dynamics and (or) cost functional can be found in \cite{AD,BDL,MOZ,yong 2011} etc.\

This paper focuses on the study of large-population system in its linear-quadratic (LQ) case (linear state, quadratic functional) by taking into account the partial information structure. Recall the linear stochastic system and its related LQ control have already been extensively investigated. One systematic introduction of stochastic LQ control can be found in the monograph \cite{yz} and the references therein. It turns out that various stochastic LQ control problems fit into the partial information framework due to the factors such as finite datum, latent process or noisy observation, etc. An extensive review of LQ control with partial information is provided in \cite{Ben92} and other related works include \cite{BO07,BEK,Ben83,Tang98,Zhou93} etc. Herein, we turn to study the partial information structures of linear large-population systems. Two cases of partial information structure are addressed and discussed in our present paper. In the first case, the individual agents can only access the filtration generated by one observable component of underlying Brownian motion. The unobservable Brownian motion component may be interpreted as the effect of a passive version of major player in the context of \cite{H10,NH12}, or be framed into a partial observation problem (see \cite{Ben92}) in which the sensor function is of zero dynamics. We discuss the related mean-field LQG games and derive the decentralized strategies which are shown to satisfy the $\epsilon$-Nash equilibrium property. An approximation scheme is introduced here and the limiting state-average process is actually driven by the unobservable component of Brownian motion. The consistency condition is further obtained through some Riccati equations. Some auxiliary mean-field stochastic differential equation (MFSDE) is introduced and analyzed. In the second case, the individual agents in our large-population system can access the information on its state by an additive white-noise observation. The decentralized strategies are derived with the help of Kalman filtering equation to the underlying state. Note that due to intrinsic approximation scheme of large-population system, the ``innovation" process and related Kalman filtering equation derived here are not defined in their classical sense. The consistency condition is obtained via two (coupled) Riccati equations. It is notable a class of mean-field LQG games with noisy observations is also addressed in \cite{hcm06} but defined on an infinite-time horizon so only algebra Riccati equations are involved. Moreover, the limiting state-average in \cite{hcm06} is deterministic as there has no common noise.\

The rest of this paper is organized as follows. Section 2, 3 study the mean-field LQG games with partial filtration structure. Specially, Section 2 gives the problem formulation, discusses the related filtering equation and consistency conditions. Section 3 is devoted to the related $\epsilon$-Nash equilibrium. Section 4 turns to discuss the mean-field LQG games with noisy observation structure. The decentralized strategies are derived by a Kalman filtering equation but in its large-population sense, and the consistency condition is represented via two Riccati equations.

\section{Mean-Field LQG (MFLQG) Games with Partial Filtration}
 Consider a finite time horizon $[0,T]$ for fixed $T>0$. For given filtration $\{\mathcal{F}_t\}_{0 \leq t \leq T},$ let $L^{2}_{\mathcal{F}}(0, T; \mathbb{R})$ denote the space of all $\mathcal{F}_t$-progressively measurable real-valued processes satisfying $\mathbb{E}\int_0^{T}|x(t)|^{2}dt<\infty;$ $L^{2}(0, T; \mathbb{R})$ the space of all real functions satisfying $\int_0^{T}|x(t)|^{2}dt<\infty;$ $L^{\infty}(0, T; \mathbb{R})$  the space of uniformly bounded functions. For a given vector or matrix $M$, $M'$ stands for its transpose. $(\Omega,
\mathcal F, P)$ is a complete probability space on which a standard $(N+1)$-dimensional Brownian motion $\{W(t),W_i(t),\ 1\le i\leq N\}_{0 \leq t \leq T}$ is defined. The information structure of large-population system is as follows. We denote by $\{\mathcal F^{w_i}_t\}_{0\leq t\leq T}$ the filtration generated by the component $W_i;$ $\{\mathcal F^w_t\}_{0 \leq t \leq T}$ the filtration generated by the component $W$. Here, $\{\mathcal F^{w_i}_t\}_{0\leq t\leq T}$ stands for the individual information owning by the $i^{th}$ agent; $\{\mathcal F^w_t\}_{0\leq t\leq T}$ the common information taking effects on all agents. $\mathcal F^i_t:=\sigma(\mathcal F^{w_i}_t \cup \mathcal F^w_t)$ represents the full information of $i^{th}$ agent up to $t$, $\mathcal F_t:=\sigma(\cup_{i=1}^N\mathcal F^i_t)$ denotes the complete information of large-population system up to $t$. For simplicity, we set $\mathcal F=\mathcal F_T$. In decentralized setup, it is infeasible for the $i^{th}$ agent to access the information of other agents, i.e., $\{\mathcal{F}^{w_j}_t\}_{0 \leq t \leq T}$ for $j \neq i.$ This is reasonable due to the asymmetric information (for example, the individual firm's own operation information will not be released to the public or its peer firms). $\{\mathcal F^w_t\}_{0\leq t\leq T}$ can represent the information of some macro process imposing on all agents (firms) due to the common external economic factors. Such information structure can be attributed to a passive major player (e.g., the dominating raw-material supplier that affects the business of all production firms with the same type). Another motivation is from a competitive market consisting of a large number of small participants with principle-agent contract setup. For instance, the contracted farms for planting and harvesting. It also denotes the hidden actions of one monopoly or public environment.\

We consider a large-population system with $N$ individual agents $\{\mathcal{A}_{i}\}_{1 \leq i \leq N}$. The state $x_i$ for $\mathcal{A}_{i}$ satisfies the following controlled linear stochastic system:
\begin{equation}\label{X1}
dx_i(t)=[A(t)x_i(t)+B(t)u_i(t)+\alpha x^{(N)}(t)+m(t)]dt +\sigma(t)dW_i(t)+\tilde{\sigma}(t)dW(t), \quad x_i(0)=x
\end{equation}
where $x^{(N)}(t)=\frac{1}{N}\sum_{i=1}^{N}x_{i}(t)$ is the state-average, $\alpha\in \mathbb{R}$ is the coupling constant. Here, for simplicity, we assume all agents are uniform (homogeneous) with the same coefficients $(A, B, \alpha, m, \sigma, \tilde{\sigma})$ and deterministic initial states $x$. Other works discussing the large-population system with common noise $W$ include \cite{cfs}, \cite{dc}, \cite{gll}. As the first partial information structure, here we assume that $W$ is a latent process which is unobservable to individual agents. Thus, the admissible control $u_i\in \mathcal{U}_i$ where the admissible control set $\mathcal{U}_i$ is defined by$$
\mathcal{U}_i:=\{u_i(\cdot)|u_i(\cdot)\in L^{2}_{\mathcal{F}_t^{w_i}}(0, T; \mathbb{R})\},\ 1\leq i \leq N.
$$\begin{remark}\emph{One explanation of above partial filtration structure comes from portfolio selection. Suppose the financial market consists of a risky-free asset $dS_0=rS_0dt$ and $m$ risky assets (stocks) satisfying geometric Brownian motion: $dS^{i}_t=S^{i}_t(\mu^{i} dt+b^{i} dW_{i}(t))$ with deterministic return rate $\mu^{i}$ and volatility rate $b^{i}$ for $i=1, \cdots, m.$ In its general setup, the wealth process for individual agent follows the \emph{budget equation} (e.g., see \cite{KPQ}): $dX_i=(rX_i-c(t)+\sum_{j=1}^{m}\pi_{i,j}(t)(\mu^j-r))dt+\sum_{j=1}^{m}(\sigma+b^j\pi_{i,j})dW_j+\widetilde{\sigma}dW(t).$ This is an extended equation of \eqref{X1} with control-dependent diffusion term. Here, $W$ denotes the capital gain tax change or taxation exemption (see \cite{cfs,TST}) which is common to all investors but unobservable during the current investment horizon $[0, T]$. The state-average $X^{(N)}$ enters the dynamics or cost functional of $X_i$ when considering the \emph{relative performance criteria} (see \cite{et}). Following \cite{KJ,xz}, in this case, the agent should choose its optimal consumption and investment strategies $u_i=(c_i, \pi_i)$ based on the observable stock prices $\mathcal{F}^{S}_t=\sigma(\cup_{j=1}^{m}\mathcal{F}^{w^{j}}_t)$ only. Further, when considering the \emph{hyperplane investment} or separation market (see \cite{et}): $m>>N,$ or $m=N$, the admissible control of individual agent can be set to be $\mathcal{F}^{w^{i}}_t.$}\end{remark}

\begin{remark}\emph{Another explanation of above partial filtration structure is as follows. Denote by $x_i$ an individual state variable (e.g., the underlying asset value) of a multinational corporation (MNC) whose business includes the domestic part and overseas part. Its asset from domestic business is governed by the idiosyncratic randomness $W_i$ whereas its overseas part mainly relies on the external international economic factors $W$ (i.e., the international raw-oil price which is common to all motor firms). Suppose $x_i$ is unmarketable thus it is unobservable to the market investors. Instead, the investor can only access its marketable domestic part, denoted by $S^{i}_t$, which is given by the geometric Brownian motion model $dS^{i}_t=S^{i}_t(\mu^{i} dt+\sigma^{i} dW^{i}_t).$ Therefore, $\mathcal{F}^{S^{i}}_t=\mathcal{F}^{w^{i}}_t$ and the investors can make their decisions based on $\mathcal{F}^{w^{i}}_t$ hence the partial filtration structure arises.}\end{remark}

Let $u_{-i}=(u_1, \cdots, u_{i-1},$ $u_{i+1}, \cdots u_{N})$ the strategies set of all agents except $\mathcal{A}_i.$ The cost functional of $\mathcal{A}_i$ is
\begin{align}\label{J1}
\mathcal{J}_i(u_i(\cdot),u_{-i}(\cdot))=\ &\mathbb{E}\left[\int_0^T \left(Q(t)(x_i(t)-x^{(N)}(t))^2
 + R(t) u^2_i(t)\right) dt+Gx^2_i(T)\right].
\end{align}Moreover, we have the following assumption:
\begin{equation*}
(H1)\quad
\begin{aligned}
&A(\cdot),B(\cdot),m(\cdot),\sigma(\cdot),\tilde{\sigma}(\cdot),Q(\cdot),R(\cdot)\in L^\infty(0,T;\mathbb{R}),\\
&\alpha\in \mathbb{R},Q(\cdot)\geq0, R(\cdot)>0, G\geq0 .
\end{aligned}
\end{equation*}Now, we formulate the large population LQG games with partial filtration (\textbf{PF}).\\

\textbf{Problem (PF).}
Find a control strategies set $\bar{u}=(\bar{u}_1,\bar{u}_2,\cdots,\bar{u}_N)$ which satisfies
$$
\mathcal{J}_i(\bar{u}_i(\cdot),\bar{u}_{-i}(\cdot))=\inf_{u_i(\cdot)\in \mathcal{U}_i}\mathcal{J}_i(u_i(\cdot),\bar{u}_{-i}(\cdot))
$$where $\bar{u}_{-i}$ represents $(\bar{u}_1,\cdots,\bar{u}_{i-1},\bar{u}_{i+1},\cdots, \bar{u}_N)$.
To study (\textbf{PF}), one efficient protocol is the mean-field LQG games which bridges the ``centralized" LQG problems via the limiting state-average, as the number of agents tends to infinity. To this end, we need figure out the representation of limiting process using heuristic arguments. Based on it, we can find the decentralized strategies by consistency condition. Due to partial filtration structure, it is natural to set the following feedback control on state filters\begin{equation}\label{u1}
\begin{aligned}
\hat{u}_i(t)=-a(t)\mathbb{E}(x_i(t)|\mathcal{F}^{w_i}_t)+\sum_{j=1,j\neq i}^{N}\tilde{a}(t)\mathbb{E}(x_j(t)|\mathcal{F}^{w_i}_t)+b(t)
\end{aligned}
\end{equation}
where the coefficients $a(\cdot), \tilde{a}(\cdot)$ and $b(\cdot)$ are deterministic functions and $\tilde{a}(\cdot)=O(\frac{1}{N})$. Inserting \eqref{u1} into state equation \eqref{X1}, we get the following realized state dynamics
\begin{equation}\label{X2}\begin{aligned}
dx_i(t)=&[A(t)x_i(t)-B(t)a(t)\mathbb{E}(x_i(t)|\mathcal{F}^{w_i}_t)+B(t)\tilde{a}(t)\sum_{j=1,j\neq i}^{N}\mathbb{E}(x_j(t)|\mathcal{F}^{w_i}_t)+B(t)b(t)\\
&+\alpha x^{(N)}(t)+m(t)]dt+\sigma(t)dW_i(t)+\tilde{\sigma}(t)dW(t), \ 1\leq i \leq N.
\end{aligned}\end{equation}
Take summation of the above $N$ equations and divide by $N$,
\begin{equation}\nonumber\begin{aligned}d\Big(\frac{1}{N}\sum_{i=1}^Nx_i(t)\Big)=&\Big[A(t)\frac{1}{N}\sum_{i=1}^Nx_i(t)-B(t)a(t)\frac{1}{N}\sum_{i=1}^N\mathbb{E}(x_i(t)|\mathcal{F}^{w_i}_t)+B(t)b(t)+\alpha x^{(N)}(t)+m(t)\\
&+B(t)\tilde{a}(t)\frac{1}{N}\sum_{i=1}^N\sum_{j=1,j\neq i}^{N}\mathbb{E}(x_j(t)|\mathcal{F}^{w_i}_t)\Big]dt+\sigma(t)\frac{1}{N}\sum_{i=1}^NdW_i(t)+\tilde{\sigma}(t)dW(t).\end{aligned}
\end{equation}Letting $N\rightarrow \infty$, we obtain the following limiting process which is a mean-field SDE:
\begin{equation}\label{X03}\left\{\begin{aligned}
dx_0(t)&=[(A(t)+\alpha)x_0-\tilde{\alpha}(t)\mathbb{E}x_0+\tilde{b}(t)]dt+\tilde{\sigma}(t)dW(t),\\
x_0(0)&=x
\end{aligned}\right.
\end{equation}where the functions $\tilde{\alpha}(\cdot), \tilde{b}(\cdot)$ are to be determined. Now, we introduce an auxiliary state as follows:
\begin{equation}\label{X3}\left\{\begin{aligned}
&dx_i(t)=[A(t)x_i(t)+B(t)u_i(t)+\alpha x_0(t)+m(t)]dt +\sigma(t)dW_i(t)+\tilde{\sigma}(t)dW(t),\\
&x_i(0)=x
\end{aligned}\right.
\end{equation}with the auxiliary cost functional
\begin{align}\label{J2}
J_i(u_i(\cdot))=\ &\mathbb{E}\left[\int_0^T \left(Q(t)(x_i(t)-x_0(t))^2
 + R(t) u^2_i(t)\right) dt+Gx^2_i(T)\right]
\end{align}where $x_0(\cdot)$ is given by \eqref{X03}. Thus, we formulate the following limiting partial filtration (\textbf{LPF}) LQG game. \\

\textbf{Problem (LPF).} For the $i^{th}$ agent, $i=1,2,\cdots,N,$ find $\bar{u}_i(\cdot)\in \mathcal{U}_i$ satisfying
$$J_i(\bar{u}_i(\cdot))=\inf_{u_i(\cdot)\in \mathcal{U}_i}J_i(u_i(\cdot)).$$
Then $\bar{u}_i(\cdot)$ is called an optimal control for Problem (\textbf{LPF}).\begin{remark}\emph{It is worth emphasizing the state process involved in (\textbf{LPF}) is given by \eqref{X3} which is different to state in (\textbf{PF}) which is given by \eqref{X1}. Specifically, the latter is affected by the actual state-average $x^{(N)}.$ Here, we still write them in the same notation to ease the presentation. }\end{remark}Applying the variational method, we have the following result to the optimal control of (\textbf{LPF}).
\begin{theorem}
Let \emph{(H1)} hold. Suppose there exists an optimal control $\bar{u}_i(\cdot)$ of Problem \emph{(\textbf{LPF})} and $\bar{x}_i(\cdot)$ is the corresponding optimal state, then there exists an adjoint process $p_i(\cdot)\in L_{\mathcal{F}^{i}}^2(0,T;\mathbb{R})$ satisfying the following backward stochastic differential equation (BSDE):
\begin{equation}\label{p}\left\{\begin{aligned}
dp_i(t)=&[-A(t)p_i(t)-Q(t)(\bar{x}_i(t)-x_0(t))]dt+\beta(t)dW_i(t)+\tilde{\beta}(t)dW(t),\\
p_i(T)=&G\bar{x}_i(T), \ i=1,2,\cdots,N
\end{aligned}\right.
\end{equation}such that$$\bar{u}_i(t) =-R^{-1}(t)B(t)\mathbb{E}(p_i(t)|\mathcal{F}^{w_i}_t)$$where the conditional expectation is defined in its optional projection version.
\end{theorem}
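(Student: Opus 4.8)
The plan is to run a convex variational (Pontryagin-type) argument adapted to the sub-filtration $\{\mathcal F^{w_i}_t\}$. Since $\mathcal U_i$ is a linear space, for the optimal $\bar u_i(\cdot)$ and any direction $v_i(\cdot)\in\mathcal U_i$ the perturbed control $\bar u_i+\epsilon v_i$ is admissible for every $\epsilon\in\mathbb R$. Because the dynamics \eqref{X3} are affine in $(x_i,u_i)$ and the diffusion coefficients $\sigma,\tilde\sigma$ do not depend on the control, the perturbed optimal state is exactly $\bar x_i+\epsilon y_i$, where $y_i$ solves the variational equation
\[
dy_i(t)=[A(t)y_i(t)+B(t)v_i(t)]\,dt,\qquad y_i(0)=0;
\]
in particular $y_i$ is of bounded variation (no martingale part) and $\mathcal F^{w_i}_t$-adapted. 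Expanding $J_i(\bar u_i+\epsilon v_i)$ as a quadratic polynomial in $\epsilon$ and using that its derivative at $\epsilon=0$ vanishes gives the first-order condition
\[
\mathbb E\Big[\int_0^T\big(Q(\bar x_i-x_0)y_i+R\bar u_i v_i\big)\,dt+G\bar x_i(T)y_i(T)\Big]=0 .
\]

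Next I would introduce the adjoint process as the unique adapted solution $(p_i,\beta,\tilde\beta)$ of the linear BSDE \eqref{p}: existence, uniqueness and the membership $p_i\in L^2_{\mathcal F^i}(0,T;\mathbb R)$ follow from classical BSDE theory, since the generator is affine (hence Lipschitz) in $p_i$, the inhomogeneous term $-Q(\bar x_i-x_0)$ lies in $L^2_{\mathcal F^i}$ and $G\bar x_i(T)\in L^2(\mathcal F^i_T)$, while $\beta$ and $\tilde\beta$ come from the martingale representation with respect to the two-dimensional Brownian motion $(W_i,W)$ generating $\mathcal F^i_t$. Applying It\^o's formula to the product $p_i(t)y_i(t)$, the quadratic-covariation term vanishes because $y_i$ has no martingale part, the terms $A p_i y_i$ cancel, and taking expectations (the stochastic integrals are true martingales by the $L^2$ bounds, and $y_i(0)=0$) yields the duality identity
\[
\mathbb E\big[G\bar x_i(T)y_i(T)\big]=\mathbb E\Big[\int_0^T\big(B p_i v_i-Q(\bar x_i-x_0)y_i\big)\,dt\Big].
\]

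Substituting this identity into the first-order condition cancels the $Q(\bar x_i-x_0)y_i$ contribution and leaves
\[
\mathbb E\Big[\int_0^T\big(R\bar u_i+B p_i\big)v_i\,dt\Big]=0\qquad\text{for all }v_i(\cdot)\in\mathcal U_i .
\]
Since $v_i$ is $\mathcal F^{w_i}_t$-progressively measurable and $R,B,\bar u_i$ are $\mathcal F^{w_i}_t$-adapted, I would condition on $\mathcal F^{w_i}_t$ under the integral sign, replacing $p_i(t)$ by its optional projection $\mathbb E(p_i(t)\,|\,\mathcal F^{w_i}_t)$ (which lies in $L^2_{\mathcal F^{w_i}}$ because conditioning contracts $L^2$); the arbitrariness of $v_i$ then forces $R(t)\bar u_i(t)+B(t)\,\mathbb E(p_i(t)\,|\,\mathcal F^{w_i}_t)=0$ for a.e.\ $(t,\omega)$, which is the asserted feedback representation. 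The one genuinely delicate point — and the only place where the partial-information structure really enters — is this last conditioning step: one must check that $\mathbb E(p_i(\cdot)\,|\,\mathcal F^{w_i}_\cdot)$ is a bona fide measurable process (hence the optional projection) and that the tower property legitimately transfers the orthogonality relation, originally valid against all $\mathcal F^i$-measurable variations, down to the smaller admissible class $\mathcal U_i$. Everything else is routine; note that convexity of $J_i$ (from $Q\ge0$, $R>0$, $G\ge0$) is not needed for this necessary condition, only the linear structure of $\mathcal U_i$.
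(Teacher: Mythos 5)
Your proposal is correct and is exactly the ``variational method'' the paper invokes (without writing out the details) to justify this theorem: convex perturbation in the linear admissible class, the linear adjoint BSDE via martingale representation for $(W_i,W)$, duality by It\^o's formula on $p_iy_i$, and projection of the resulting orthogonality relation onto $\mathcal F^{w_i}_t$. No discrepancy with the paper's approach; the conditioning step you flag as delicate is indeed the only point where the partial-information structure enters, and you handle it correctly.
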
Consequently, we get the following Hamiltonian system for $\mathcal{A}_i$:
\begin{equation}\label{Hs}\left\{\begin{aligned}
dx_0(t)=&[(A(t)+\alpha)x_0(t)-\tilde{\alpha}(t)\mathbb{E}x_0(t)+\tilde{b}(t)]dt+\tilde{\sigma}(t)dW(t),\\
d\bar{x}_i(t)=&[A(t)\bar{x}_i(t)-B^2(t)R^{-1}(t)\mathbb{E}(p_i(t)|\mathcal{F}^{w_i}_t)+\alpha x_0(t)+m(t)]dt\\
&+\sigma(t)dW_i(t)+\tilde{\sigma}(t)dW(t), \\
dp_i(t)=&[-A(t)p_i(t)-Q(t)(\bar{x}_i(t)-x_0(t))]dt+\beta(t)dW_i(t)+\tilde{\beta}(t)dW(t),\\
x_0(0)=&\bar{x}_i(0)=x,\ p_i(T)=G\bar{x}_i(T), \ i=1,2,\cdots, N.
\end{aligned}\right.
\end{equation}Note that in system \eqref{Hs}, the forward optimal state $\bar{x}_i(\cdot)$ depends on the backward adjoint process $p_i(\cdot)$ through its filtering state $\mathbb{E}(p_i(t)|\mathcal{F}^{w_i}_t)$. In this sense, \eqref{Hs} becomes a filtered FBSDE system and its decoupling should be proceeded through some FBSDE that involves the filtering state only. To this end, we introduce the following filter notations$$
\hat{\bar{x}}_i(t)=\mathbb{E}[\bar{x}_i(t)|\mathcal{F}^{w_i}_t],\ \ \ \hat{p}_i(t)=\mathbb{E}[p_i(t)|\mathcal{F}^{w_i}_t]
$$where the conditional expectations to the partial filtration $\mathcal{F}^{w_i}_t$ should be understood in the version of optional projection. Then we reach a FBSDE system
involving the state filters only:\begin{equation}\label{CHs}\left\{\begin{aligned}
d\hat{\bar{x}}_i(t)=&[A(t)\hat{\bar{x}}_i(t)-B^2(t)R^{-1}(t)\hat{p}_i(t)+\alpha \mathbb{E}x_0(t)+m(t)]dt+\sigma(t)dW_i(t), \\
\hat{\bar{x}}_i(0)=&x,\\
d\hat{p}_i(t)=&[-A(t)\hat{p}_i(t)-Q(t)(\hat{\bar{x}}_i(t)-\mathbb{E}x_0(t))]dt+\beta(t)dW_i(t),\\
\hat{p}_i(T)=&G\hat{\bar{x}}_i(T), \ i=1,2,\cdots, N.
\end{aligned}\right.
\end{equation}Note that system \eqref{CHs} is driven by $W_i$ only so it becomes observable to agent $\mathcal{A}_i.$ It can be viewed a filtering system of \eqref{Hs} that is unobservable as driven by $W_i$ and $W$ both. Taking expectation on \eqref{X03},
\begin{equation}\label{Ex}\left\{\begin{aligned}
d\mathbb{E}x_0(t)=&[(A(t)+\alpha-\tilde{\alpha}(t))\mathbb{E}x_0(t)+\tilde{b}(t)]dt,\\
\mathbb{E}x_0(0)=&x
\end{aligned}\right.
\end{equation}
where $\tilde{\alpha}(\cdot),\ \tilde{b}(\cdot)$ are functions to be determined. One key step in mean-field game is to analyze the related consistency condition (which is also called Nash certainty equivalence (NCE) principle, see \cite{gll}, \cite{hcm06}, \cite{hcm07}, etc).
Based on the consistency condition, the limiting state average $x_0$ can be determined in a consistent manner: the individual feedback close-loop states of all agents, when applying the derived decentralized strategies, should reformalize the limiting state average at the beginning. As $x_0$ follows \eqref{X03}, so its determination is equivalent to the determination of coefficient functions $(\tilde{\alpha}(\cdot),\ \tilde{b}(\cdot)).$ Thus we aim to derive the consistency condition satisfied by $(\tilde{\alpha}(\cdot),\ \tilde{b}(\cdot))$. We first state the following result.
\begin{theorem}\label{PP2}
Suppose \emph{(H1)} hold true and the following Riccati equation system
\begin{equation}\label{P}
\left \{
\begin{aligned}
&\dot{\Pi}(t)+(2A(t)+\alpha)\Pi(t)-B^2(t)R^{-1}(t)\Pi^2(t)=0,\\
&\dot{\Phi}(t)+[A(t)-B^2(t)R^{-1}(t)\Pi(t)]\Phi(t)+m(t)\Pi(t) =0,\\
&\Pi(T)=G,\ \Phi(T) =0
\end{aligned}
\right.
\end{equation}admits unique solution $(\Pi(\cdot),\Phi(\cdot))$, then $(\tilde{\alpha}(\cdot),\ \tilde{b}(\cdot))$ can be uniquely determined by\begin{equation}\label{ab}\left\{
\begin{aligned}
\tilde{\alpha}(t)&=B^2(t)R^{-1}(t)\Pi(t),\\
\tilde{b}(t)&=-B^2(t)R^{-1}(t)\Phi(t)+m(t).
\end{aligned}\right.
\end{equation}
\end{theorem}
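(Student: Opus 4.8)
The plan is to turn the informal \emph{consistency} (Nash‑certainty‑equivalence) requirement into a coupled linear forward–backward ODE for the expected limiting average $\mathbb{E}x_0$ and the expected adjoint $\mathbb{E}p_i$, and then to observe that \emph{decoupling} that ODE is nothing but the Riccati system \eqref{P}, so that \eqref{ab} can simply be read off. First I would make precise what ``$x_0$ is reproduced by the closed loop'' means: average the $N$ optimal closed‑loop state equations from \eqref{Hs} (one per agent), using the decentralized feedback $\bar u_i=-R^{-1}B\,\hat p_i$ with $\hat p_i=\mathbb{E}(p_i|\mathcal{F}^{w_i}_t)$, and send $N\to\infty$. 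The structural point here is that each $\hat p_i$ is $\mathcal{F}^{w_i}_t$-adapted and the only common input it still feels is the \emph{deterministic} function $\mathbb{E}x_0$ — the common noise $W$ has been integrated out by the conditioning — so the $\hat p_i$'s are i.i.d.\ copies of one functional of $W_i$. Hence $\frac1N\sum_i\hat p_i(t)\to\mathbb{E}\hat p_1(t)=\mathbb{E}p_1(t)$ and $\frac{\sigma}{N}\sum_i dW_i\to0$ (while $\tilde\sigma\,dW$ survives, matching the diffusion of \eqref{X03} automatically); requiring the limiting state‑average to coincide with $x_0$ of \eqref{X03} and comparing drifts then gives the scalar matching identity $-\tilde\alpha(t)\mathbb{E}x_0(t)+\tilde b(t)=-B^2(t)R^{-1}(t)\,\mathbb{E}p_1(t)+m(t)$.

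Next I would take expectations throughout the filtered Hamiltonian system \eqref{CHs}. With $\check x(t):=\mathbb{E}\hat{\bar x}_i(t)=\mathbb{E}\bar x_i(t)$ and $\check p(t):=\mathbb{E}\hat p_i(t)=\mathbb{E}p_i(t)$ (the same for every $i$ by homogeneity under \emph{(H1)}), \eqref{CHs} reduces to $d\check x=[A\check x-B^2R^{-1}\check p+\alpha\,\mathbb{E}x_0+m]\,dt$, $\check x(0)=x$, together with $d\check p=[-A\check p-Q(\check x-\mathbb{E}x_0)]\,dt$, $\check p(T)=G\check x(T)$. Substituting the matching identity into the $\check x$-equation and subtracting \eqref{Ex} shows $\check x-\mathbb{E}x_0$ solves a homogeneous linear ODE with zero initial value, hence $\check x\equiv\mathbb{E}x_0$ on $[0,T]$. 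This cancellation is the crucial simplification: the running‑cost coupling $Q(\check x-\mathbb{E}x_0)$ disappears, leaving $d\check p=-A\check p\,dt$, $\check p(T)=G\,\mathbb{E}x_0(T)$, while (using the matching identity once more) $\mathbb{E}x_0$ obeys $d\mathbb{E}x_0=[(A+\alpha)\mathbb{E}x_0-B^2R^{-1}\check p+m]\,dt$, $\mathbb{E}x_0(0)=x$. Thus $(\mathbb{E}x_0,\check p)$ solves a coupled linear forward–backward ODE on $[0,T]$.

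Then I would decouple this system with the affine ansatz $\check p(t)=\Pi(t)\mathbb{E}x_0(t)+\Phi(t)$: differentiating, inserting the forward equation for $\mathbb{E}x_0$, and matching the $\mathbb{E}x_0$-coefficient and the free term against $d\check p=-A\check p\,dt$ produces exactly the two ODEs of \eqref{P}, while the terminal condition $\check p(T)=G\,\mathbb{E}x_0(T)$ is met by $\Pi(T)=G$, $\Phi(T)=0$. Conversely, given the assumed unique solution $(\Pi,\Phi)$ of \eqref{P}, the closed linear ODE $d\mathbb{E}x_0=[(A+\alpha-B^2R^{-1}\Pi)\mathbb{E}x_0-B^2R^{-1}\Phi+m]\,dt$ has a unique solution, and $\check p:=\Pi\,\mathbb{E}x_0+\Phi$ then solves the backward part with the correct terminal value, so the reduced system is uniquely solvable. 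Finally, plugging $\check p=\Pi\,\mathbb{E}x_0+\Phi$ into the matching identity gives $-\tilde\alpha\,\mathbb{E}x_0+\tilde b=-B^2R^{-1}\Pi\,\mathbb{E}x_0+(m-B^2R^{-1}\Phi)$; identifying the $\mathbb{E}x_0$-proportional part with $-\tilde\alpha$ and the state‑independent remainder with $\tilde b$ — the term‑by‑term identification dictated by the affine form postulated in \eqref{X03} — yields \eqref{ab}, and uniqueness of $(\tilde\alpha,\tilde b)$ follows from that of $(\Pi,\Phi)$.

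The step I expect to be the main obstacle is the first one: rigorously justifying that the $N\to\infty$ limit of the closed‑loop state‑average is governed by the claimed equation, i.e.\ a law‑of‑large‑numbers / propagation‑of‑chaos argument carried out for the \emph{filtered} processes, hinging on the fact that conditioning on $\mathcal{F}^{w_i}_t$ makes $\{\hat p_i\}$ i.i.d.\ because the common noise $W$ is averaged out and only the deterministic $\mathbb{E}x_0$ survives as a shared exogenous input. Everything after the matching identity is a routine linear‑ODE computation, with the one pleasant surprise being the cancellation $\check x\equiv\mathbb{E}x_0$ that strips away the running‑cost coupling and leaves a system decoupled precisely by \eqref{P} — which is exactly the claimed equivalence ``consistency $\Leftrightarrow$ well‑posedness of \eqref{P}''.
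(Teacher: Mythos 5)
Your argument is correct and reaches \eqref{P}--\eqref{ab}, but by a genuinely different decomposition than the paper's. The paper posits a three-function affine ansatz at the level of the \emph{stochastic} filtered pair, $\hat{p}_i(t)=P(t)\hat{\bar{x}}_i(t)+\hat{P}(t)\mathbb{E}x_0(t)+\Phi(t)$, applies It\^{o}'s formula to \eqref{CHs}, compares coefficients to get the three parameterized Riccati equations \eqref{PPfai}, identifies $\tilde{\alpha}=B^2R^{-1}(P+\hat{P})$ and $\tilde{b}=-B^2R^{-1}\Phi+m$ from the closed-loop averaging limit, and only then observes that $\Pi:=P+\hat{P}$ closes up into \eqref{P} (the $Q$ terms cancel between the $P$- and $\hat{P}$-equations). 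You instead take expectations of \eqref{CHs} first, reducing everything to a deterministic linear forward--backward ODE for $(\mathbb{E}x_0,\mathbb{E}p_i)$, use the consistency requirement against \eqref{Ex} to show $\mathbb{E}\bar{x}_i\equiv\mathbb{E}x_0$ so that the $Q$-coupling drops out \emph{a priori}, and then a two-function ansatz $\mathbb{E}p_i=\Pi\,\mathbb{E}x_0+\Phi$ yields \eqref{P} in one pass --- I checked the coefficient matching and it reproduces both equations and the terminal data exactly. What your route buys is economy and an explanation of why $Q$ is absent from \eqref{P}; what it gives up is the individual Riccati function $P(\cdot)$ and hence the decoupled feedback representation \eqref{DHS}/\eqref{ubari}, which the theorem does not need but Section 3 does. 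Both arguments rest on the same two heuristic pillars, stated at comparable levels of rigor: the law-of-large-numbers step for the i.i.d.\ filtered processes (your observation that conditioning on $\mathcal{F}^{w_i}_t$ integrates out $W$ and leaves only the deterministic input $\mathbb{E}x_0$ is exactly why this works), and the convention that $(\tilde{\alpha},\tilde{b})$ are read off term-by-term from the affine form postulated in \eqref{X03} --- the paper's ``comparing the coefficients'' step --- so neither is a gap relative to the paper's own proof.
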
\begin{proof}By the terminal condition of \eqref{Hs}, we suppose
\begin{equation}\label{CP}
\hat{p}_i(t)=P(t)\hat{\bar{x}}_i(t)+\hat{P}(t)\mathbb{E}x_0(t)+\Phi(t)
\end{equation}
for some $P(\cdot),\hat{P}(\cdot)\in L^\infty (0,T;\mathbb{R})$ and $\Phi(t)\in L^\infty (0,T;\mathbb{R})$ with terminal conditions
$$
P(T)=G,\ \hat{P}(T)=\Phi(T)=0.
$$
Applying It\^{o}'s formula to \eqref{CP} and noting \eqref{Hs}, we have
\[%
\begin{array}
[c]{rl}%
d\hat{p}_i(t)
=&\Big(\dot{P}(t)+P(t)A(t)-B^2(t)R^{-1}(t)P^2(t)\Big)\hat{\bar{x}}_i(t)dt\\
&+\Big(\dot{\hat{P}}(t)+\hat{P}(t)(A(t)+\alpha-\tilde{\alpha}(t))-P(t)B^2(t)R^{-1}(t)\hat{P}(t)+\alpha P(t)\Big)\mathbb{E}x_0(t)dt\\
&+\Big(\dot{\Phi}(t)-P(t)B^2(t)R^{-1}(t)\Phi(t)+P(t)m(t)+\hat{P}(t)\tilde{b}(t)\Big)dt+P(t)\sigma(t)dW_i(t)\\
=&\Big[\left(-Q(t)-A(t)P(t)\right)\hat{\bar{x}}_i(t)+(Q(t)-A(t)\hat{P}(t))\mathbb{E}x_0(t)-A(t)\Phi(t)\Big]dt+\beta(t)dW_i(t).
\end{array}
\]
Comparing coefficients, we obtain
\begin{equation}\label{PPfai}
\left \{ \begin{aligned}
&\dot{P}(t)+P(t)A(t)-B^2(t)R^{-1}(t)P^2(t)=-Q(t)-A(t)P(t),\\
&\dot{\hat{P}}(t)+\hat{P}(t)(A(t)+\alpha-\tilde{\alpha}(t))-P(t)B^2(t)R^{-1}(t)\hat{P}(t)+\alpha P(t)=Q(t)-A(t)\hat{P}(t),\\
&\dot{\Phi}(t)-P(t)B^2(t)R^{-1}(t)\Phi(t)+P(t)m(t)+\hat{P}(t)\tilde{b}(t)=-A(t)\Phi(t),\\
&\beta(t)=P(t)\sigma(t).
\end{aligned} \right.
\end{equation}Note that the above Riccati equations are parameterized by the undetermined functions $(\tilde{\alpha}(t), \tilde{b}(t))$ which are to be specified below.
To this end, note that the optimal state $\bar{x}_i(t)$ can be represented by
\begin{equation}\nonumber
\begin{aligned}
d\bar{x}_i(t)=&[A(t)\bar{x}_i(t)-B^2(t)R^{-1}(t)(P(t)\hat{\bar{x}}_i(t)+\hat{P}(t)\mathbb{E}x_0(t)+\Phi(t))+\alpha x_0(t)+m(t)]dt\\
& +\sigma(t)dW_i(t)+\tilde{\sigma}(t)dW(t).
\end{aligned}
\end{equation}
Therefore the state-average satisfies:
\begin{equation}\nonumber
\begin{aligned}dx^{(N)}(t)=&\Big[A(t)x^{(N)}(t)-B^2(t)R^{-1}(t)(P(t)\frac{1}{N}\sum_{i=1}^N\mathbb{E}(\bar{x}_i(t)|\mathcal{F}^{w_i}_t)+\hat{P}(t)\\
&\cdot\mathbb{E}x_0(t)+\Phi(t))+\alpha x_0(t)+m(t)\Big]dt+\sigma(t)\frac{1}{N}\sum_{i=1}^NdW_i(t)+\tilde{\sigma}(t)dW(t).\end{aligned}
\end{equation}
Let $N \rightarrow +\infty,$ the limiting process $x_0$ is given by
\begin{equation}
\begin{aligned}
dx_0=&\Big[(A+\alpha)x_0-B^2R^{-1}(P+\hat{P})\mathbb{E}x_0-B^2R^{-1}\Phi+m\Big]dt+\tilde{\sigma}dW.
\end{aligned}
\end{equation}
Comparing the coefficients with \eqref{Hs}, we have
\begin{equation}\label{ab}\left\{
\begin{aligned}
\tilde{\alpha}(t)&=B^2(t)R^{-1}(t)(P(t)+\hat{P}(t)),\\
\tilde{b}(t)&=-B^2(t)R^{-1}(t)\Phi(t)+m(t).
\end{aligned}\right.
\end{equation}
Thus we rewrite \eqref{PPfai} as
\begin{equation}\nonumber
\left \{
\begin{aligned}
&\dot{P}(t)+2A(t)P(t)-B^2(t)R^{-1}(t)P^2(t)+Q(t)=0,\\
&\dot{\hat{P}}(t)+\hat{P}(t)[2A(t)+\alpha-B^2(t)R^{-1}(t)(P(t)+\hat{P}(t))-B^2(t)R^{-1}(t)P(t)]+\alpha P(t)-Q(t)=0,\\
&\dot{\Phi}(t)+[A(t)-(P(t)+\hat{P}(t))B^2(t)R^{-1}(t)]\Phi(t)+(P(t)+\hat{P}(t))m(t)=0,\\
&P(T)=G,\ \hat{P}(T)=\Phi(T) =0.
\end{aligned}
\right.
\end{equation}
Letting $\Pi(t)=P(t)+\hat{P}(t)$, we get
\begin{equation}\label{pai}\left\{
\begin{aligned}
&\dot{\Pi}(t)+(2A(t)+\alpha)\Pi(t)-B^2(t)R^{-1}(t)\Pi^2(t)=0,\\
&\Pi(T)=G.
\end{aligned}\right.
\end{equation}
\end{proof}Moreover,
the filtering system \eqref{CHs} can be decoupled as\begin{equation}\label{DHS}
\left \{
\begin{aligned}
&d\hat{\bar{x}}_i(t)=\Big[\Big(A(t)-B^2(t)R^{-1}(t)P(t)\Big)\hat{\bar{x}}_i(t)+\Big(\alpha-B^2(t)R^{-1}(t)(\Pi(t)-P(t))\Big) \mathbb{E}x_0(t)\\
&\qquad\qquad-B^2(t)R^{-1}(t)\Phi(t)+m(t)\Big]dt+\sigma(t)dW_i(t), \\
&\hat{p}_i(t)=P(t)\hat{\bar{x}}_i(t)+(\Pi(t)-P(t))\mathbb{E}x_0(t)+\Phi(t),\\
&\hat{\bar{x}}_i(0)=x_i(0),\  \hat{p}_i(T)=G\hat{\bar{x}}_i(T).
\end{aligned}
\right.
\end{equation}
Taking average of all and sending $N\rightarrow +\infty,$ we regenerate
\begin{equation}\label{DHS2}
\left \{
\begin{aligned}
d\mathbb{E}x_0=&[(A+\alpha-B^2R^{-1}\Pi)\mathbb{E}x_0-B^2R^{-1}\Phi+m]dt,\\
\mathbb{E}x_0(0)&=x.\end{aligned}
\right.
\end{equation}

\begin{remark}\ \emph{To conclude this section, we give some remarks concerning to Theorem {\ref{PP2}}.}

\emph{(1)}\emph{\quad Unlike most literature on mean-field LQG games, there has no fixed-point arguments explicitly involved here (e.g., some contraction mapping based on the datum of our problem) to characterize the consistency condition. Instead, our consistency condition is transformed into the wellposedness of Riccati equation system \eqref{P}. Actually, $(\hat{P}(\cdot), \Phi(\cdot))$ depend on $(\tilde{\alpha}(\cdot), \tilde{b}(\cdot)),$ thus \eqref{ab} can be rewritten by\begin{equation}\left\{
\begin{aligned}\nonumber
\tilde{\alpha}&=\mathcal{T}_1(\tilde{\alpha}):=B^2R^{-1}(P+\hat{P}(\tilde{\alpha})),\\
\tilde{b}&=\mathcal{T}_2(\tilde{b}):=-B^2R^{-1}\Phi(\tilde{\alpha}, \tilde{b})+m.
\end{aligned}\right.
\end{equation}In this sense, \eqref{P} can be understood as the consistency condition of (\textbf{LPF}) }.

\emph{(2)}\quad \emph{By {\cite{yong 2011},\ \cite{yz}}, it follows $P(\cdot)$ is determined uniquely as a nonnegative constant. One sufficient condition for the existence and uniqueness of $\Pi(\cdot)$ can be found in {\cite{my}} hence the solvability of $\hat{P}(\cdot)$ follows directly by noting $\Pi(t)=P(t)+\hat{P}(t)$. In addition, the solvability of $\Phi(\cdot)$ follows from that of $\Pi(\cdot).$}

\emph{(3)}\quad \emph{The advantages of handling the consistency condition of $(\tilde{\alpha}(\cdot), \tilde{b}(\cdot))$ are as follows. The consistency condition imposed on
$(\tilde{\alpha}(\cdot), \tilde{b}(\cdot))$ is equivalent to the wellposedness of Riccati equation \eqref{P} which can be ensured in an arbitrary time interval. On the other hand, as addressed in \cite{CD}, the fixed-point analysis on $x$ will preferably lead to the consistency condition defined only on a small time interval. This finding is also response to the standard result in forward-backward SDE theory: as discussed in \cite{Anto}, the usual contraction mapping on forward-backward system will always lead to its existence and uniqueness in a very small time interval. This should also be the case in mean-field games where the ``backward" Hamilton-Jacobi-Bellman (HJB) equation is coupled with the ``forward" Fokker-Planck (FP) equation.}
\end{remark}

\section{$\epsilon$-Nash Equilibrium for Problem (\textbf{PF})}
 Now we show that $(\bar{u}_1,\bar{u}_2,\cdots, \bar{u}_N)$ satisfies the $\epsilon$-Nash equilibrium for (\textbf{PF}).
\begin{definition}
A set of controls $u_k(\cdot)\in \mathcal{U}_k,\ 1\leq k\leq N,$ for $N$ agents is called an $\epsilon$-Nash equilibrium with respect to the costs $\mathcal{J}_k,\ 1\leq k\leq N,$ if there exists $\epsilon\geq0$ such that for any fixed $1\leq i\leq N$, we have
\begin{equation}\label{DNE}
\mathcal{J}_i(u_i,u_{-i})\leq\mathcal{J}_i(u'_i,u_{-i})+\epsilon
\end{equation}
when any alternative control $u'_i(\cdot)\in \mathcal{U}_i$ is applied by $\mathcal{A}_i$.
\end{definition}
\begin{theorem}\label{main}
Let \emph{(H1)} hold and \eqref{P} admit a solution $(\Pi, \Phi),$ then $(\bar{u}_1,\bar{u}_2,\cdots,\bar{u}_N)$ satisfies the $\epsilon$-Nash equilibrium of Problem \emph{(\textbf{PF})}. Here, for $1\le i\le N,$ $\bar{u}_i$ is given by\begin{equation}\label{ubari}
\bar{u}_i(t)=-R^{-1}(t)B(t)\Big[P(t)\hat{\bar{x}}_i(t)+(\Pi(t)-P(t))\mathbb{E}x_0(t)+\Phi(t)\Big]
\end{equation}
where $\hat{\bar{x}}_i$ and $\mathbb{E}x_0$ satisfy \eqref{DHS} and \eqref{DHS2} respectively.
\end{theorem}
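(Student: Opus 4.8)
The plan is to follow the standard $\epsilon$-Nash route for mean-field LQG games: (i) estimate the difference between the actual state-average $x^{(N)}$ (under the decentralized strategies $\bar u_i$) and the limiting process $x_0$, showing it is $O(1/\sqrt N)$ in a suitable $L^2$-norm; (ii) use this to compare, for a fixed agent $\mathcal A_i$, the actual cost $\mathcal J_i(\bar u_i,\bar u_{-i})$ with the auxiliary (limiting) cost $J_i(\bar u_i)$, and conclude $|\mathcal J_i(\bar u_i,\bar u_{-i})-J_i(\bar u_i)|=O(1/\sqrt N)$; (iii) show that for any alternative admissible $u_i'\in\mathcal U_i$ applied by $\mathcal A_i$ (with all other agents keeping $\bar u_j$), the actual cost $\mathcal J_i(u_i',\bar u_{-i})$ is likewise within $O(1/\sqrt N)$ of $J_i(u_i')$; since $\bar u_i$ is optimal for Problem (\textbf{LPF}), $J_i(\bar u_i)\le J_i(u_i')$, and chaining the three estimates yields $\mathcal J_i(\bar u_i,\bar u_{-i})\le\mathcal J_i(u_i',\bar u_{-i})+\epsilon$ with $\epsilon=O(1/\sqrt N)\to 0$.

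For step (i), I would write the closed-loop dynamics of $x_i$ under $\bar u_i$ as in \eqref{DHS}-type equations, take the average over $i$, and subtract the equation \eqref{DHS2} for $\mathbb E x_0$ together with \eqref{X03} for $x_0$. The key point is that $\frac1N\sum_{i=1}^N dW_i(t)$ contributes a martingale term of variance $O(1/N)$, while $\frac1N\sum_{i=1}^N\widehat{\bar x}_i(t)$ converges to $\mathbb E x_0(t)$ because the $\widehat{\bar x}_i$ are i.i.d.\ copies (same coefficients, same initial data, driven by the independent $W_i$) of a process with mean $\mathbb E x_0$; a law-of-large-numbers estimate plus Gronwall's inequality gives $\mathbb E\sup_{0\le t\le T}|x^{(N)}(t)-x_0(t)|^2=O(1/N)$. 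One must be careful that under an arbitrary perturbation $u_i'$ of a single agent the state-average is perturbed only by a single $O(1/N)$ term in the drift (after using $\mathbb E\int_0^T|u_i'|^2<\infty$ and a stability estimate for the linear SDE), so the bound persists; this uses boundedness of all coefficients from (H1) and the fact that $R(\cdot)>0$ forces $\mathcal J_i(u_i',\bar u_{-i})\le\mathcal J_i(\bar u_i,\bar u_{-i})$ to be the only relevant range of $u_i'$, hence a uniform $L^2$-bound on the candidate perturbations.

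For steps (ii) and (iii), the cost comparison is routine once (i) is in place: expand $\mathcal J_i-J_i$ using $(x_i-x^{(N)})^2-(x_i-x_0)^2=(x_0-x^{(N)})(2x_i-x^{(N)}-x_0)$ and apply Cauchy--Schwarz together with the a priori $L^2$-bounds on $x_i$, $x^{(N)}$, $x_0$ (uniform in $N$, again from (H1) and the $L^2$-bound on controls). The terminal term $Gx_i^2(T)$ is identical in both functionals. This yields the desired $O(1/\sqrt N)$ gaps. I expect the main obstacle to be the technical handling of the conditional-expectation (optional projection) terms: the feedback $\bar u_i$ depends on $\widehat{\bar x}_i=\mathbb E[\bar x_i\mid\mathcal F^{w_i}_t]$, and in the $N$-agent system with a single agent deviating one must verify that the filtering representations remain valid and that the cross terms $\frac1N\sum_{j\ne i}\mathbb E[\bar x_j\mid\mathcal F^{w_i}_t]$ are genuinely $O(1/N)$ in the relevant norm — i.e.\ that conditioning on $\mathcal F^{w_i}_t$ of an $x_j$ ($j\ne i$) does not destroy the averaging. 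This is where the independence of the $W_j$'s and the $O(1/N)$ scaling of $\tilde a(\cdot)$ in \eqref{u1} are essential, and it is the step that genuinely distinguishes the partial-information analysis from the full-information one flagged in the abstract.
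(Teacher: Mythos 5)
Your proposal is correct and follows essentially the same route as the paper: the same chain $\mathcal J_i(\bar u_i,\bar u_{-i})=J_i(\bar u_i)+O(1/\sqrt N)\le J_i(u_i')+O(1/\sqrt N)=\mathcal J_i(u_i',\bar u_{-i})+O(1/\sqrt N)$, supported by the same ingredients (law-of-large-numbers plus Gronwall for the state-average gap, the $L^2$-bound on admissible perturbations forced by $R>0$, and Cauchy--Schwarz for the cost gaps), which the paper organizes into Lemmas \ref{xNhat}--\ref{p3} and Propositions \ref{z111}--\ref{allz} with auxiliary intermediate processes $\check z_i,\check z^{(N-1)}$ making your ``single $O(1/N)$ drift perturbation'' step precise. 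Your only inessential worry concerns cross terms $\frac1N\sum_{j\ne i}\mathbb{E}[\bar x_j\mid\mathcal F^{w_i}_t]$: these appear only in the heuristic ansatz \eqref{u1}, while the realized strategy \eqref{ubari} uses only $\hat{\bar x}_i$ and the deterministic $\mathbb{E}x_0$, so no such conditioning arises in the actual estimates.
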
As preliminaries of proving the theorem, several lemmas are presented and proved to proceed some estimates on the state and cost difference between Problem (\textbf{PF}) and (\textbf{LPF}). Recall that
\begin{equation}\label{X4}\left\{\begin{aligned}
d\bar{x}_i(t)=&\Big[A(t)\bar{x}_i(t)-B^2(t)R^{-1}(t)\Big(P(t)\hat{\bar{x}}_i(t)+\hat{P}(t)\mathbb{E}x_0(t)+\Phi(t)\Big)+\alpha x_0(t)+m(t)\Big]dt\\
& +\sigma(t)dW_i(t)+\tilde{\sigma}(t)dW(t),\\
d\hat{\bar{x}}_i(t)=&\Big[\Big(A(t)-B^2(t)R^{-1}(t)P(t)\Big)\hat{\bar{x}}_i(t)+\Big(\alpha-B^2(t)R^{-1}(t)\hat{P}(t)\Big)\mathbb{E}x_0(t)\\
&-B^2(t)R^{-1}(t)\Phi(t)+m(t)\Big]dt+\sigma(t)dW_i(t),\\
\bar{x}_i(0)=&\hat{\bar{x}}_i(t)=x_i(0),
\end{aligned}\right.
\end{equation}and denote$$
\bar{x}^{(N)}(t)=\frac{1}{N}\sum_{i=1}^N\bar{x}_i(t),\ \ \ \ \hat{\bar{x}}^{(N)}(t)=\frac{1}{N}\sum_{i=1}^N\hat{\bar{x}}_i(t).
$$
Here, $\bar{x}^{(N)}(t)$ denotes the average of state (in Problem (LPF)) while $\hat{\bar{x}}^{(N)}$ denotes the average of filtered states. Note that $\hat{\bar{x}}_i(t)$ is driven by $W_i$ only thus it is observable to the individual agent $\mathcal{A}_i.$ It enters the state dynamics in \eqref{X4} as an input process when applying the optimal strategy constructed on it. Some estimates are as follows.\begin{lemma}\label{xNhat}
\begin{flalign}
&\sup_{0\leq t\leq T}\mathbb{E}\Big|\hat{\bar{x}}^{(N)}(t)-\mathbb{E}x_0(t)\Big|^2=O\Big(\frac{1}{N}\Big),\label{e1}\\
&\sup_{0\leq t\leq T}\mathbb{E}\Big|\bar{x}^{(N)}(t)-x_0(t)\Big|^2=O\Big(\frac{1}{N}\Big).\label{e2}
\end{flalign}
\end{lemma}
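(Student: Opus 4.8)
The plan is to establish \eqref{e1} first and then derive \eqref{e2} from it. For \eqref{e1}, I would write the SDE satisfied by the difference $\hat{\bar{x}}^{(N)}(t)-\mathbb{E}x_0(t)$. Averaging the second equation of \eqref{X4} over $i=1,\dots,N$ gives a linear SDE for $\hat{\bar{x}}^{(N)}$ whose drift is $(A-B^2R^{-1}P)\hat{\bar{x}}^{(N)}+(\alpha-B^2R^{-1}\hat{P})\mathbb{E}x_0-B^2R^{-1}\Phi+m$ and whose diffusion term is $\frac{1}{N}\sum_{i=1}^N\sigma\,dW_i$. Subtracting \eqref{DHS2} (equivalently the equation for $\mathbb{E}x_0$ with $\Pi=P+\hat{P}$), the inhomogeneous and $\mathbb{E}x_0$-proportional terms cancel exactly — this is precisely the content of the consistency condition established in Theorem \ref{PP2} — leaving
\begin{equation}\nonumber
d\big(\hat{\bar{x}}^{(N)}(t)-\mathbb{E}x_0(t)\big)=\big(A(t)-B^2(t)R^{-1}(t)P(t)\big)\big(\hat{\bar{x}}^{(N)}(t)-\mathbb{E}x_0(t)\big)dt+\frac{\sigma(t)}{N}\sum_{i=1}^N dW_i(t),
\end{equation}
with zero initial condition. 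The key observation is that $\frac{1}{N}\sum_{i=1}^N W_i$ has quadratic variation $t/N$, since the $W_i$ are independent; hence the martingale part contributes $O(1/N)$ to the second moment. Applying It\^o's formula to $|\hat{\bar{x}}^{(N)}(t)-\mathbb{E}x_0(t)|^2$, taking expectations, using (H1) to bound the coefficients uniformly, and invoking Gr\"onwall's inequality yields $\sup_{0\le t\le T}\mathbb{E}|\hat{\bar{x}}^{(N)}(t)-\mathbb{E}x_0(t)|^2=O(1/N)$.

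For \eqref{e2}, I would average the first equation of \eqref{X4} to get the SDE for $\bar{x}^{(N)}$, which has drift $A\bar{x}^{(N)}-B^2R^{-1}(P\hat{\bar{x}}^{(N)}+\hat{P}\mathbb{E}x_0+\Phi)+\alpha x_0+m$ and diffusion $\frac{1}{N}\sum_i\sigma\,dW_i+\tilde\sigma\,dW$. Subtracting \eqref{X03} (with $\tilde\alpha=B^2R^{-1}\Pi$, $\tilde b=-B^2R^{-1}\Phi+m$ as in \eqref{ab}), the common-noise terms $\tilde\sigma\,dW$ cancel, and after rearranging one finds that the difference $\bar{x}^{(N)}-x_0$ satisfies a linear SDE driven by the term $-B^2R^{-1}P(\hat{\bar{x}}^{(N)}-\mathbb{E}x_0)$ plus $\frac{1}{N}\sum_i\sigma\,dW_i$, again with zero initial condition. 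Here I would use the bound from \eqref{e1} to control the forcing term $-B^2R^{-1}P(\hat{\bar{x}}^{(N)}-\mathbb{E}x_0)$ in $L^2$, handle $\frac{1}{N}\sum_i\sigma\,dW_i$ exactly as before, and close with It\^o, Cauchy--Schwarz and Gr\"onwall to obtain \eqref{e2}.

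The main obstacle, and the step that deserves the most care, is verifying that the drift terms cancel correctly when forming the differences — i.e., that the feedback coefficients $(P,\hat{P},\Phi)$ together with $(\tilde\alpha,\tilde b)$ satisfy exactly the algebraic relations \eqref{ab} so that no residual deterministic or $\mathbb{E}x_0$-proportional term survives. This is essentially a bookkeeping exercise consuming the output of Theorem \ref{PP2}, but it is where an error would be easy to make. The probabilistic content — independence of the $W_i$ giving the $1/N$ scaling of the averaged martingale, and the Gr\"onwall argument — is routine once the cancellations are in place.
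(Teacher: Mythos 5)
Your proposal matches the paper's own argument (Appendix A) essentially step for step: form the SDE for $\hat{\bar{x}}^{(N)}-\mathbb{E}x_0$ by averaging \eqref{X4} and using the consistency relations so the drift reduces to $(A-B^2R^{-1}P)(\hat{\bar{x}}^{(N)}-\mathbb{E}x_0)$, note that independence of the $W_i$ makes the averaged martingale term $O(1/N)$ in $L^2$, close with Gr\"onwall, and then bootstrap \eqref{e2} from \eqref{e1} in the same way. The argument is correct; no further comment is needed.
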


\begin{proof} See Appendix A.
\end{proof}Denote $y_i,1\le i\le N,$ the state of $\mathcal{A}_i$ to the control $\bar{u}_i,1\le i\le N$ in Problem (\textbf{PI}), namely,
\begin{equation}\label{y}\left\{\begin{aligned}
dy_i(t)=&\Big[A(t)y_i(t)-B^2(t)R^{-1}(t)\Big(P(t)\hat{\bar{x}}_i(t)+\hat{P}(t)\mathbb{E}x_0(t)+\Phi(t)\Big)+\alpha y^{(N)}(t)\\
&+m(t)\Big]dt+\sigma(t)dW_i(t)+\tilde{\sigma}(t)dW(t),\\
y_i(0)=&x_i(0)
\end{aligned}\right.
\end{equation}where $y^{(N)}(t)=\frac{1}{N}\sum_{j=1}^Ny_j(t)$.

By the difference of states corresponding to $\bar{u}_i$ in (\textbf{PI}) and (\textbf{LPI}), we have the following estimates:
\begin{lemma}\label{yN}
\begin{flalign}
&\sup_{0\leq t\leq T}\mathbb{E}\Big|y^{(N)}(t)-x_0(t)\Big|^2=O\Big(\frac{1}{N}\Big),\label{e3}\\
&\sup_{1\leq i\leq N}\left[\sup_{0\leq t\leq T}\mathbb{E}\Big|y_i(t)-\bar{x}_i(t)\Big|^2\right]=O\Big(\frac{1}{N}\Big),\label{e4}\\
&\sup_{1\leq i\leq N}\left[\sup_{0\leq t\leq T}\mathbb{E}\Big||y_i(t)|^2-|\bar{x}_i(t)|^2\Big|\right]=O\Big(\frac{1}{\sqrt{N}}\Big).\label{e5}
\end{flalign}
\end{lemma}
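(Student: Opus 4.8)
The plan is to deduce all three bounds from one propagation-of-chaos estimate on the state-average, together with the observation that the per-agent discrepancy $y_i-\bar{x}_i$ carries no Brownian component. First I would compare the averaged realized dynamics with the limiting one: averaging \eqref{y} over $i$, and subtracting the equation for $x_0$ (that is, \eqref{X03} with the coefficients $\tilde{\alpha}=B^2R^{-1}(P+\hat{P})$, $\tilde{b}=-B^2R^{-1}\Phi+m$ supplied by Theorem \ref{PP2}), one sees that the common-noise terms $\tilde{\sigma}(t)\,dW(t)$ cancel exactly and that $\delta^{(N)}:=y^{(N)}-x_0$ solves
\begin{equation*}
d\delta^{(N)}(t)=\Big[(A(t)+\alpha)\delta^{(N)}(t)-B^2(t)R^{-1}(t)P(t)\big(\hat{\bar{x}}^{(N)}(t)-\mathbb{E}x_0(t)\big)\Big]dt+\sigma(t)\,\frac{1}{N}\sum_{i=1}^{N}dW_i(t),\qquad \delta^{(N)}(0)=0 .
\end{equation*}
Because the $W_i$ are independent, the It\^{o} isometry shows that the stochastic integral $\int_0^t\sigma(r)\,N^{-1}\sum_{i=1}^{N}dW_i(r)$ has $L^2$-norm $O(N^{-1/2})$ uniformly in $t\in[0,T]$; feeding in Lemma \ref{xNhat} (which bounds $\hat{\bar{x}}^{(N)}-\mathbb{E}x_0$ in $L^2$ at rate $1/N$), the $L^\infty$-bounds from (H1) and the boundedness of $P(\cdot)$, Gronwall's inequality yields $\sup_{0\le t\le T}\mathbb{E}|\delta^{(N)}(t)|^2=O(1/N)$, which is \eqref{e3}.

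For \eqref{e4}, set $\eta_i:=y_i-\bar{x}_i$ and subtract \eqref{X4} from \eqref{y}. Since $\bar{u}_i$ is built from the off-line processes $\hat{\bar{x}}_i,\mathbb{E}x_0,\Phi$ — which appear identically in both systems — and the diffusion terms $\sigma\,dW_i+\tilde{\sigma}\,dW$ again cancel, $\eta_i$ obeys the random \emph{ordinary} differential equation $\dot{\eta}_i(t)=A(t)\eta_i(t)+\alpha\,\delta^{(N)}(t)$ with $\eta_i(0)=0$ (in particular $\eta_i$ does not depend on $i$). Hence $\eta_i(t)=\alpha\int_0^t e^{\int_s^t A(r)\,dr}\delta^{(N)}(s)\,ds$, and Cauchy--Schwarz in $s$ together with \eqref{e3} gives $\sup_{1\le i\le N}\sup_{0\le t\le T}\mathbb{E}|\eta_i(t)|^2\le C\int_0^T\mathbb{E}|\delta^{(N)}(s)|^2\,ds=O(1/N)$, which is \eqref{e4}.

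Finally, for \eqref{e5} I would factor $|y_i|^2-|\bar{x}_i|^2=\eta_i\,(y_i+\bar{x}_i)$ and apply Cauchy--Schwarz, so that $\mathbb{E}\bigl||y_i(t)|^2-|\bar{x}_i(t)|^2\bigr|\le(\mathbb{E}|\eta_i(t)|^2)^{1/2}(\mathbb{E}|y_i(t)+\bar{x}_i(t)|^2)^{1/2}$. The first factor is $O(1/\sqrt{N})$ by \eqref{e4}; for the second I would establish the uniform moment bounds $\sup_i\sup_{0\le t\le T}\mathbb{E}|\hat{\bar{x}}_i(t)|^2<\infty$ (linear SDE driven by $W_i$ with bounded deterministic forcing), then $\sup_i\sup_t\mathbb{E}|\bar{x}_i(t)|^2<\infty$ (using also $\sup_t\mathbb{E}|x_0(t)|^2<\infty$), and finally $\sup_i\sup_t\mathbb{E}|y_i(t)|^2<\infty$ via $y_i=\bar{x}_i+\eta_i$ and the bound on $\eta_i$. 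Thus the second factor is $O(1)$ and \eqref{e5} follows.

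The one place that needs genuine care is \eqref{e3}: the martingale $\sigma\,N^{-1}\sum_i dW_i$ must be kept separate from the drift and Lemma \ref{xNhat} inserted before invoking Gronwall, so that the constant stays independent of $N$ and the estimate holds on the whole (arbitrary) horizon $[0,T]$. Once \eqref{e3} is in hand, \eqref{e4} and \eqref{e5} are soft consequences of the fact that $y_i-\bar{x}_i$ has no Brownian part and is slaved to $\delta^{(N)}$; the only mild bookkeeping there is the uniform-in-$i$ second-moment bounds, which follow from (H1) and the $L^\infty$-regularity of $P,\hat{P},\Phi$.
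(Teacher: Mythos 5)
Your proposal is correct and follows essentially the same route as the paper's own proof: the common-noise terms cancel when subtracting the limiting dynamics, \eqref{e3} comes from the independence of the $W_i$, Lemma \ref{xNhat} and Gronwall, \eqref{e4} follows because $y_i-\bar{x}_i$ satisfies a driftless-in-noise ODE forced by $y^{(N)}-x_0$, and \eqref{e5} is the same Cauchy--Schwarz argument (the paper expands $|y_i|^2-|\bar x_i|^2$ as $|y_i-\bar x_i|^2+2\bar x_i(y_i-\bar x_i)$ rather than factoring, which is equivalent). Your version merely spells out the explicit equation for $y^{(N)}-x_0$ and the uniform second-moment bounds that the paper leaves implicit.
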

\begin{proof}By \eqref{y} and \eqref{DHS}, the estimate \eqref{e3} can be verified by the same method in Lemma \ref{xNhat}. According to \eqref{y} and \eqref{X4}, we have
\begin{equation}\nonumber\left\{\begin{aligned}
&d\Big(y_i(t)-\bar{x}_i(t)\Big)=\Big[A(t)(y_i(t)-\bar{x}_i(t))+\alpha(y^{(N)}(t)-x_0(t))\Big]dt,\\
&y_i(0)-\bar{x}_i(0)=0.
\end{aligned}\right.
\end{equation}Thus, \eqref{e4} follows from \eqref{e3}. Since $\sup_{0\leq t\leq T}\mathbb{E}|\bar{x}_i(t)|^2<+\infty$, applying Cauchy-Schwarz inequality, we get
\begin{equation}\nonumber\begin{aligned}
&\sup_{0\leq t\leq T}\mathbb{E}\Big||y_i(t)|^2-|\bar{x}_i(t)|^2\Big|\\
\leq& \sup_{0\leq t\leq T}\mathbb{E}|y_i(t)-\bar{x}_i(t)|^2+2\sup_{0\leq t\leq T}\mathbb{E}|\bar{x}_i(t)(y_i(t)-\bar{x}_i(t))|\\
\leq&\sup_{0\leq t\leq T}\mathbb{E}|y_i(t)-\bar{x}_i(t)|^2+2\Big(\sup_{0\leq t\leq T}\mathbb{E}|\bar{x}_i(t)|^2\Big)^{\frac{1}{2}}\Big(\sup_{0\leq t\leq T}\mathbb{E}|y_i(t)-\bar{x}_i(t)|^2\Big)^{\frac{1}{2}}\\
=&O\Big(\frac{1}{\sqrt{N}}\Big)
\end{aligned}
\end{equation}which completes the proof.
\end{proof}As to the difference of cost functionals, it holds
\begin{lemma}\label{l8}For $\forall\ 1\leq i\leq N,$
\begin{equation}\label{e6}
 \Big|\mathcal{J}_i(\bar{u}_i,\bar{u}_{-i})-J_i(\bar{u}_i)\Big|=O\Big(\frac{1}{\sqrt{N}}\Big).
\end{equation}
\end{lemma}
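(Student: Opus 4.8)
The plan is to write the difference $\mathcal{J}_i(\bar{u}_i,\bar{u}_{-i})-J_i(\bar{u}_i)$ explicitly using the definitions \eqref{J1} and \eqref{J2}, and then bound each of the three resulting terms (the running cost involving $Q$, the running cost involving $R$, and the terminal cost involving $G$) using the estimates already established in Lemma \ref{yN}. Recall that under the control $\bar{u}_i$, the state in Problem (\textbf{PF}) is $y_i$ (satisfying \eqref{y}) and the state in Problem (\textbf{LPF}) is $\bar{x}_i$ (satisfying \eqref{X4}). Crucially, the control $\bar{u}_i$ itself is the \emph{same} in both problems — it is the feedback \eqref{ubari} built on $\hat{\bar{x}}_i$ and $\mathbb{E}x_0$, which do not depend on whether the realized dynamics are \eqref{y} or \eqref{X4} — so the $R(t)\bar{u}_i^2(t)$ terms cancel exactly and contribute nothing to the difference.

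First I would reduce to the two state-dependent pieces:
\begin{equation}\nonumber
\mathcal{J}_i(\bar{u}_i,\bar{u}_{-i})-J_i(\bar{u}_i)
=\mathbb{E}\int_0^T Q(t)\Big[(y_i-y^{(N)})^2-(\bar{x}_i-x_0)^2\Big]dt
+G\,\mathbb{E}\Big[y_i^2(T)-\bar{x}_i^2(T)\Big].
\end{equation}
For the terminal term, $|G\,\mathbb{E}[y_i^2(T)-\bar{x}_i^2(T)]| \le G\sup_{0\le t\le T}\mathbb{E}\big||y_i(t)|^2-|\bar{x}_i(t)|^2\big| = O(1/\sqrt N)$ directly by \eqref{e5} and boundedness of $G$. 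For the running term, I would write $(y_i-y^{(N)})^2-(\bar{x}_i-x_0)^2 = \big[(y_i-y^{(N)})-(\bar{x}_i-x_0)\big]\big[(y_i-y^{(N)})+(\bar{x}_i-x_0)\big]$, note that the first factor equals $(y_i-\bar{x}_i)-(y^{(N)}-x_0)$ whose $L^2$-norm is $O(1/\sqrt N)$ by \eqref{e4} and \eqref{e3}, and that the second factor has uniformly bounded $L^2$-norm (since $\sup_t\mathbb{E}|\bar{x}_i(t)|^2<\infty$, $\sup_t\mathbb{E}|x_0(t)|^2<\infty$, and the $y$-quantities differ from these by $O(1/\sqrt N)$ in $L^2$). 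Then Cauchy--Schwarz, together with $Q\in L^\infty$ and finiteness of $T$, gives that this term is $O(1/\sqrt N)$ as well, exactly mirroring the estimate \eqref{e5} computation already displayed in the proof of Lemma \ref{yN}.

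I do not expect any serious obstacle here; the lemma is essentially a bookkeeping consequence of Lemma \ref{yN}. The one point requiring a little care — and the closest thing to a ``hard part'' — is making fully explicit that $\bar{u}_i$ generates the \emph{same} control process in both (\textbf{PF}) and (\textbf{LPF}), so that the quadratic-in-control terms cancel rather than merely being close; this rests on the observation (already flagged in Remark \ref{...} following Problem (\textbf{LPF})) that $\bar{u}_i$ is an open-loop-in-$(\hat{\bar{x}}_i,\mathbb{E}x_0)$ process whose law is unaffected by the coupling through the realized state-average. After that, uniformity in $i$ is immediate because every constant appearing in the bounds comes from $\|Q\|_\infty$, $G$, $T$, and the $O(1/N)$ constants of Lemma \ref{yN}, none of which depend on $i$.
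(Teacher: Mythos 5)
Your proposal is correct and follows essentially the same route as the paper's Appendix B: the control terms cancel, the terminal term is handled by \eqref{e5}, and the running $Q$-term is bounded by applying Cauchy--Schwarz to the difference of squares $(y_i-y^{(N)})^2-(\bar{x}_i-x_0)^2$ using \eqref{e3}, \eqref{e4} and the $L^2$-boundedness of $\bar{x}_i-x_0$ (the paper writes $a^2-b^2=(a-b)^2+2b(a-b)$ rather than $(a-b)(a+b)$, which is the same estimate). No gaps.
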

\begin{proof} See Appendix B.
\end{proof}After addressing the above estimates of states and costs corresponding to control $\bar{u}_i,1\le i\le N$, given by \eqref{ubari}, our goal is to prove that the control strategies set $(\bar{u}_1,\cdots,\bar{u}_N)$ is an $\epsilon$-Nash equilibrium for Problem (\textbf{PI}). For any fixed $i$, $1\le i\le N$, consider an admissible control $u_i \in \mathcal{U}_i$ for $\mathcal{A}_i$ and denote $z_i$ the corresponding state process in Problem (\textbf{PI}), that is\begin{equation}\label{y2}\left\{\begin{aligned}
dz_i(t)=&\Big[A(t)z_i(t)+B(t)u_i(t)+\alpha z^{(N)}(t)+m(t)\Big]dt +\sigma(t)dW_i(t)+\tilde{\sigma}(t)dW(t),\\
z_i(0)=&x_i(0)
\end{aligned}\right.
\end{equation}whereas other agents keep the control $\bar{u}_j, 1\le j\le N,j\neq i$, i.e.,
\begin{equation}\label{y3}\left\{\begin{aligned}
dz_j(t)=&\Big[A(t)z_j(t)-B^2(t)R^{-1}(t)\Big(P(t)\hat{\bar{x}}_j(t)+\hat{P}(t)\mathbb{E}x_0(t)+\Phi(t)\Big)\\
&+\alpha z^{(N)}(t)+m(t)\Big]dt+\sigma(t)dW_j(t)+\tilde{\sigma}(t)dW(t),\\
z_j(0)=&x_j(0)
\end{aligned}\right.
\end{equation}
where $z^{(N)}(t)=\frac{1}{N}\sum_{j=1}^Nz_j(t)$ and $\hat{\bar{x}}_j(t)$ is given by \eqref{X4}. If $\bar{u}_i,\ 1\leq i\leq N$ is an $\epsilon$-Nash equilibrium with respect to cost $\mathcal{J}_i$, it holds that$$\mathcal{J}_i(\bar{u}_i,\bar{u}_{-i})\geq \inf_{u_i\in \mathcal{U}_i}\mathcal{J}_i(u_i,\bar{u}_{-i})\geq \mathcal{J}_i(\bar{u}_i,\bar{u}_{-i})-\epsilon.$$
Then, when making the perturbation, we just need to consider $u_i\in \mathcal{U}_i$ such that $\mathcal{J}_i(u_i,\bar{u}_{-i})\leq \mathcal{J}_i(\bar{u}_i,\bar{u}_{-i}),$ which implies
\begin{equation}\nonumber\begin{aligned}
\mathbb{E}\int_0^TR(t)u^2_i(t)dt\leq \mathcal{J}_i(u_i,\bar{u}_{-i})\leq \mathcal{J}_i(\bar{u}_i,\bar{u}_{-i})=J_i(\bar{u}_i)+O\Big(\frac{1}{\sqrt{N}}\Big).
\end{aligned}
\end{equation}
In the limiting cost functional, by the optimality of $(\bar{x}_i,\bar{u}_i)$, we get that $(\bar{x}_i,\bar{u}_i)$ is $L^2$-bounded. Then we obtain the boundedness of $J_i(\bar{u}_i)$, i.e.,
\begin{equation}\label{ubound}
    \mathbb{E}\int_0^TR(t)u^2_i(t)dt\leq C_0
\end{equation}where $C_0$ is a positive constant, independent of $N$. Thus we have
\begin{prop}\label{z111}
For any fixed $i,1\le i\le N$, $\sup\limits_{0\leq t\leq T}\mathbb{E}|z_i(t)|^2$ is bounded.
\end{prop}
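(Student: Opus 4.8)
The plan is to control the perturbed state-average $z^{(N)}$ first, uniformly in $N$, and only then to bound the individual state $z_i$; the point that makes this work is that the (possibly large) perturbation $u_i$ enters the averaged dynamics only with weight $1/N$. First, from \eqref{ubound} together with (H1) (so that $R^{-1}(\cdot)\in L^\infty(0,T;\mathbb{R})$) we get $\mathbb{E}\int_0^T u_i^2(t)\,dt\le C_1$ with $C_1$ independent of $N$. Next, averaging \eqref{y2}--\eqref{y3} over $j=1,\dots,N$ yields a closed scalar SDE for $z^{(N)}$ whose drift is $(A(t)+\alpha)z^{(N)}(t)+m(t)+\tfrac1N B(t)u_i(t)-\tfrac1N\sum_{j\neq i}B^2(t)R^{-1}(t)\big(P(t)\hat{\bar{x}}_j(t)+\hat{P}(t)\mathbb{E}x_0(t)+\Phi(t)\big)$ and whose diffusion is $\tfrac{\sigma(t)}{N}\sum_{j=1}^N dW_j(t)+\tilde\sigma(t)dW(t)$.

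Now I would observe the following uniform bounds. Each $\hat{\bar x}_j$ solves the linear SDE in \eqref{X4} with bounded coefficients and the same deterministic initial datum $x$, hence the $\hat{\bar x}_j$ share a common law and $\sup_{1\le j\le N}\sup_{0\le t\le T}\mathbb{E}|\hat{\bar x}_j(t)|^2<\infty$; moreover $P(\cdot),\hat P(\cdot),\Phi(\cdot),\Pi(\cdot)\in L^\infty(0,T;\mathbb{R})$ by Theorem \ref{PP2} and the remark following it, and $\mathbb{E}x_0(\cdot)$ solves the linear ODE \eqref{DHS2}, so it is bounded. Consequently the term $\tfrac1N\sum_{j\neq i}B^2R^{-1}(P\hat{\bar x}_j+\hat P\,\mathbb{E}x_0+\Phi)$ is bounded in $L^2(\Omega\times[0,T])$ uniformly in $N$, while $\mathbb{E}\int_0^T|\tfrac1N B u_i|^2dt\le CN^{-2}C_1$ and the diffusion part has quadratic variation $\big(\tfrac{\sigma^2(t)}{N}+\tilde\sigma^2(t)\big)dt$, also uniformly bounded. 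Applying It\^o's formula to $|z^{(N)}(t)|^2$ and then Gronwall's inequality gives $\sup_{0\le t\le T}\mathbb{E}|z^{(N)}(t)|^2\le C_2$ with $C_2$ independent of $N$.

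Finally I would return to \eqref{y2} and regard $f_i(t):=B(t)u_i(t)+\alpha z^{(N)}(t)+m(t)$ as an exogenous forcing term; by the first step and the bound on $z^{(N)}$ just obtained, $\mathbb{E}\int_0^T f_i^2(t)\,dt\le C_3$ independent of $N$. Since \eqref{y2} is then a scalar linear SDE with bounded coefficients $A(\cdot),\sigma(\cdot),\tilde\sigma(\cdot)$ and deterministic initial value $x_i(0)=x$, the standard a priori estimate (It\^o on $|z_i|^2$ plus Gronwall, or the variation-of-constants representation) yields $\sup_{0\le t\le T}\mathbb{E}|z_i(t)|^2\le C\big(|x|^2+\mathbb{E}\int_0^T f_i^2(t)\,dt+\int_0^T(\sigma^2(t)+\tilde\sigma^2(t))\,dt\big)\le C_4$, which is the assertion.

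\textbf{Main obstacle.} The only genuinely non-routine point is the coupling through $z^{(N)}$: one cannot estimate $z_i$ in isolation because $z^{(N)}$ contains $z_i$, and $u_i$ is a priori only $L^2$-bounded, not pointwise controlled. The resolution is the $1/N$ damping of $u_i$ in the averaged equation, which lets $z^{(N)}$ be closed off by a self-contained Gronwall argument relying on the uniform $L^2$-bounds for the $\hat{\bar x}_j$ and for $\mathbb{E}x_0$ — and those in turn rest on the wellposedness of the Riccati system \eqref{P} from Theorem \ref{PP2}. Everything after that closure is a textbook linear-SDE estimate.
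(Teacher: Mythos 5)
Your proposal is correct and follows essentially the same route as the paper: both arguments close the coupling by a Gronwall estimate on an aggregate quantity (the paper bounds $\sup_t\mathbb{E}\big[\sum_{k}|z_k(t)|^2\big]=O(N)$ by summing the individual inequalities, while you bound $\sup_t\mathbb{E}|z^{(N)}(t)|^2=O(1)$ via the averaged SDE in which $u_i$ carries the $1/N$ weight), and both then read off the individual bound from the equation for $z_i$ with $L^2$-bounded forcing, using \eqref{ubound} and the $L^2$-boundedness of the optimal controls $\bar{u}_j$, $j\neq i$. The choice of aggregate is interchangeable here, so this is a cosmetic rather than substantive difference.
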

\begin{proof} See Appendix C.
\end{proof}Correspondingly, the state process $\bar{x}^0_i$ for agent $\mathcal{A}_i$ under control $u_i$ in Problem (\textbf{LPI}) satisfies


\begin{equation}\label{y4}\left\{\begin{aligned}
d\bar{x}^0_i(t)=&\Big[A(t)\bar{x}^0_i(t)+B(t)u_i(t)+\alpha x_0(t)+m(t)\Big]dt +\sigma(t)dW_i(t)+\tilde{\sigma}(t)dW(t),\\
\bar{x}^0_i(0)=&x_i(0)
\end{aligned}\right.
\end{equation}
and for agent $\mathcal{A}_j$, $j\neq i$,
\begin{equation}\label{y5}\left\{\begin{aligned}
d\bar{x}_j(t)=&\Big[A(t)\bar{x}_j(t)-B^2(t)R^{-1}(t)\Big(P(t)\hat{\bar{x}}_j(t)+\hat{P}(t)\mathbb{E}x_0(t)+\Phi(t)\Big)\\
&+\alpha x_0(t)+m(t)\Big]dt+\sigma(t)dW_j(t)+\tilde{\sigma}(t)dW(t),\\
\bar{x}_j(0)=&x_j(0)
\end{aligned}\right.
\end{equation}
where $\hat{\bar{x}}_j$ and $x_0$ are given in \eqref{DHS}.

In order to give necessary estimates of perturbed states and costs in Problem (\textbf{PI}) and (\textbf{LPI}), we introduce some intermediate states and present some of their properties.
Denote $$z^{(N-1)}(t)=\frac{1}{N-1}\sum_{j=1,j\neq i}^Nz_j(t),\ \ \hat{\bar{x}}^{(N-1)}(t)=\frac{1}{N-1}\sum_{j=1,j\neq i}^N\hat{\bar{x}}_j(t).$$
Then by \eqref{y3}, we have
\begin{equation}\label{zN}\left\{\begin{aligned}
dz^{(N-1)}(t)=&\Big[(A(t)+\frac{N-1}{N}\alpha)z^{(N-1)}(t)-B^2(t)R^{-1}(t)\Big(P(t)\hat{\bar{x}}^{(N-1)}(t)+\hat{P}(t)\mathbb{E}x_0(t)\\
&+\Phi(t)\Big)+\frac{\alpha}{N}z_i(t)+m(t)\Big]dt+\frac{1}{N-1}\sum_{j=1,j\neq i}^N\sigma(t)dW_j(t)+\tilde{\sigma}(t)dW(t),\\
z^{(N-1)}(0)=&x^{(N-1)}(0)
\end{aligned}\right.
\end{equation}
where $x^{(N-1)}(0)=\frac{1}{N-1}\sum_{j=1,j\neq i}^Nx_j(0)$. Besides, we introduce
\begin{equation}\label{y6}\left\{\begin{aligned}
d\check{z}_i(t)=&\Big[A(t)\check{z}_i(t)+B(t)u_i(t)+\frac{N-1}{N}\alpha\check{z}^{(N-1)}(t)+m(t)\Big]dt\\
&+\sigma(t)dW_i(t)+\tilde{\sigma}(t)dW(t),\\
\check{z}_i(0)=&x_i(0)
\end{aligned}\right.
\end{equation}and for $j\neq i$,\begin{equation}\label{y7}\left\{\begin{aligned}
d\check{z}_j(t)=&\Big[A(t)\check{z}_j(t)-B^2(t)R^{-1}(t)\Big(P(t)\hat{\bar{x}}_j(t)+\hat{P}(t)\mathbb{E}x_0(t)+\Phi(t)\Big)\\
&+\frac{N-1}{N}\alpha \check{z}^{(N-1)}(t)+m(t)\Big]dt+\sigma(t)dW_j(t)+\tilde{\sigma}(t)dW(t),\\
\check{z}_j(0)=&x_j(0)
\end{aligned}\right.
\end{equation}where $\check{z}^{(N-1)}(t)=\frac{1}{N-1}\sum_{j=1,j\neq i}^N\check{z}_j(t)$.

We have the following estimates on these states.
\begin{prop}\label{allz}
\begin{align}\label{zzN}
&\sup_{0\leq t\leq T}\mathbb{E}\Big|\hat{\bar{x}}^{(N-1)}(t)-\mathbb{E}x_0(t)\Big|^2=O\Big(\frac{1}{N}\Big),\\
\label{zN1x0}
&\sup_{0\leq t\leq T}\mathbb{E}\Big|z^{(N)}(t)-z^{(N-1)}(t)\Big|^2=O\Big(\frac{1}{N}\Big),\\
\label{zizi}
&\sup_{0\leq t\leq T}\mathbb{E}\Big|\check{z}^{(N-1)}(t)-z^{(N-1)}(t)\Big|^2=O\Big(\frac{1}{N^2}\Big),\\
\label{yN1Ex0}
&\sup_{0\leq t\leq T}\mathbb{E}\Big|\check{z}^{(N-1)}(t)-x_0(t)\Big|^2=O\Big(\frac{1}{N}\Big),\\
\label{zzN1}
&\sup_{0\leq t\leq T}\mathbb{E}\Big|z_i(t)-\check{z}_i(t)\Big|^2=O\Big(\frac{1}{N^2}\Big),\\
\label{zxi0}
&\sup_{0\leq t\leq T}\mathbb{E}\Big|\check{z}_i(t)-\bar{x}^0_i(t)\Big|^2=O\Big(\frac{1}{N}\Big).
\end{align}
\end{prop}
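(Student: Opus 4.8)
The plan is to establish the six estimates in Proposition~\ref{allz} in the order listed, since each relies essentially on the previous ones together with Lemma~\ref{xNhat} and Lemma~\ref{yN} and a routine Gronwall argument. First I would prove \eqref{zzN}: since $\hat{\bar x}^{(N-1)}$ is the average of $N-1$ of the processes $\hat{\bar x}_j$, which satisfy the same linear SDE \eqref{X4} driven by independent $W_j$, one writes $\hat{\bar x}^{(N-1)}-\mathbb{E}x_0$ as the solution of a linear SDE whose drift is $(A-B^2R^{-1}P)(\hat{\bar x}^{(N-1)}-\mathbb{E}x_0)$ plus a term that vanishes (because $\mathbb{E}x_0$ solves \eqref{DHS2}), and whose diffusion is $\frac{1}{N-1}\sum_{j\neq i}\sigma\,dW_j$. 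The diffusion has quadratic variation of order $1/(N-1)=O(1/N)$ by independence, so Gronwall on $\mathbb{E}|\hat{\bar x}^{(N-1)}-\mathbb{E}x_0|^2$ gives \eqref{zzN}; this is essentially \eqref{e1} of Lemma~\ref{xNhat} with $N$ replaced by $N-1$.

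Next, \eqref{zN1x0} compares $z^{(N)}=\frac1N\sum_{j=1}^N z_j$ with $z^{(N-1)}=\frac1{N-1}\sum_{j\neq i}z_j$; the difference is $\frac1N z_i-\frac1{N(N-1)}\sum_{j\neq i}z_j$ plus lower-order bookkeeping, so using Proposition~\ref{z111} (boundedness of $\mathbb{E}|z_i|^2$) and the uniform $L^2$-bound on the $z_j$ one gets $O(1/N^2)$ for the pointwise difference at each $t$ — actually $O(1/N^2)$, which is stronger than the claimed $O(1/N)$, so the estimate follows. Then \eqref{zizi}: subtract \eqref{y7} from \eqref{y3}; the difference $\check z^{(N-1)}-z^{(N-1)}$ solves a linear SDE with zero initial data and zero diffusion, whose drift contains $(A+\frac{N-1}{N}\alpha)(\check z^{(N-1)}-z^{(N-1)})$ plus the forcing term $\frac{N-1}{N}\alpha(z^{(N-1)}-z^{(N)})\cdot(\text{sign})$, i.e.\ essentially $-\frac{\alpha}{N}z_i$ from \eqref{zN}; since that forcing is $O(1/N)$ in $L^2$ (by Proposition~\ref{z111}), squaring and Gronwall yield $O(1/N^2)$. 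Estimate \eqref{yN1Ex0} is then immediate from \eqref{zizi}, \eqref{zN1x0} and \eqref{e3} of Lemma~\ref{yN} (replacing $y^{(N)}$ by $z^{(N-1)}$, noting the structure of \eqref{y7} matches \eqref{y}) via the triangle inequality: $\mathbb{E}|\check z^{(N-1)}-x_0|^2 \le 3\mathbb{E}|\check z^{(N-1)}-z^{(N-1)}|^2 + 3\mathbb{E}|z^{(N-1)}-z^{(N)}|^2 + 3\mathbb{E}|z^{(N)}-x_0|^2$, where the last term is $O(1/N)$ by the same argument as \eqref{e3}. Estimate \eqref{zzN1} follows by subtracting \eqref{y6} from \eqref{y2}: the difference solves a linear ODE-type SDE (no diffusion) with forcing $\alpha(z^{(N)}-\frac{N-1}{N}\check z^{(N-1)}) = \alpha(z^{(N)}-z^{(N-1)}) + \frac{\alpha}{N}z^{(N-1)} + \frac{N-1}{N}\alpha(z^{(N-1)}-\check z^{(N-1)})$, each piece $O(1/N)$ in $L^2$ by \eqref{zN1x0}, boundedness, and \eqref{zizi}, so Gronwall gives $O(1/N^2)$. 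Finally \eqref{zxi0}: subtract \eqref{y4} from \eqref{y6}; the difference has forcing $\frac{N-1}{N}\alpha\check z^{(N-1)}-\alpha x_0 = \alpha(\check z^{(N-1)}-x_0) - \frac{\alpha}{N}\check z^{(N-1)}$, which is $O(1/\sqrt N)$ in $L^2$ by \eqref{yN1Ex0} and boundedness, hence $O(1/N)$ after squaring and Gronwall.

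Throughout, the mechanism is the same: each difference process solves a linear SDE with bounded (by (H1)) coefficients, zero initial condition, and either zero diffusion or a diffusion term built from $\frac1{N}$ or $\frac1{N-1}$ times a sum of independent Brownian increments (whose $L^2$-norm is therefore $O(1/\sqrt N)$); Itô's formula applied to the square, followed by Gronwall's inequality, transfers the order of the forcing/diffusion to the order of the solution. The one point requiring care — and the main obstacle — is \eqref{zizi} and \eqref{zzN1}, where one must be careful that the coupling through the \emph{common} control-perturbed agent $\mathcal A_i$ enters only at order $1/N$: the forcing term is $\frac{\alpha}{N}z_i$ (or $\frac{\alpha}{N}\check z_i$), and it is precisely the a~priori bound $\sup_t\mathbb{E}|z_i(t)|^2\le C$ from Proposition~\ref{z111} (which in turn rests on the energy bound \eqref{ubound} coming from the perturbation being no worse than $\bar u_i$) that makes this term genuinely $O(1/N)$ rather than merely bounded, yielding the sharper $O(1/N^2)$ rates. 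Since all constants from Gronwall depend only on $T$ and the $L^\infty$-bounds of the coefficients in (H1), they are uniform in $i$ and independent of $N$, so the $\sup_{1\le i\le N}$ versions hold as stated. The detailed computations I would relegate to an appendix, as the paper does for the analogous Lemmas~\ref{xNhat} and~\ref{l8}.
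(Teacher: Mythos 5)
Your proposal is correct and follows essentially the same route as the paper's Appendix D: difference equations with zero initial data, Gronwall's inequality, and the a priori $L^2$-bounds from Proposition \ref{z111}, with the key observation that the perturbed agent enters the other agents' averaged dynamics only through the $O(1/N)$ forcing term $\frac{\alpha}{N}z_i$. The one small divergence is \eqref{yN1Ex0}, which the paper obtains by estimating $\check{z}^{(N-1)}(t)-x_0(t)$ directly from the dynamics of $\check{z}^{(N-1)}$ together with \eqref{DHS} and \eqref{zzN} (so that $z^{(N)}$ never appears), whereas you detour through $\sup_{t}\mathbb{E}|z^{(N)}(t)-x_0(t)|^2=O(1/N)$ --- this is \eqref{Le2}, which the paper deduces \emph{from} Proposition \ref{allz}, so to avoid circularity you do need the independent Gronwall argument you sketch, in which \eqref{ubound} controls the $\frac{1}{N}B u_i$ contribution of the perturbed control.
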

\begin{proof}
See Appendix D.
\end{proof}

Further, more direct estimates about states and costs of Problem (\textbf{PI}) and (\textbf{LPI}) under perturbed controls can be obtained, which enable us to prove Theorem \ref{main}.


\begin{lemma}\label{p3}
\begin{align}\label{Le1}
&\sup_{0\leq t\leq T}\mathbb{E}\Big|z_i(t)-\bar{x}^0_i(t)\Big|^2=O\Big(\frac{1}{N}\Big),\\ \label{Le2}
&\sup_{0\leq t\leq T}\mathbb{E}\Big|z^{(N)}(t)-x_0(t)\Big|^2=O\Big(\frac{1}{N}\Big),\\ \label{Le3}
&\sup_{0\leq t\leq T}\mathbb{E}\Big||z_i(t)|^2-|\bar{x}^0_i(t)|^2\Big|=O\Big(\frac{1}{\sqrt{N}}\Big),\\  \label{Le4}
&\Big|\mathcal{J}_i(u_i,\bar{u}_{-i})-J_i(u_i)\Big|=O\Big(\frac{1}{\sqrt{N}}\Big).
\end{align}
\end{lemma}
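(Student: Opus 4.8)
The plan is to prove the four estimates \eqref{Le1}--\eqref{Le4} in order, reusing the intermediate processes $\check z_i, \check z^{(N-1)}$ and the estimates of Proposition \ref{allz} as a bridge between Problem (\textbf{PI}) (states $z_i$) and Problem (\textbf{LPI}) (states $\bar x_i^0$). First I would establish \eqref{Le1} by the triangle inequality in $L^2$: write $\mathbb{E}|z_i(t)-\bar x_i^0(t)|^2 \le 2\mathbb{E}|z_i(t)-\check z_i(t)|^2 + 2\mathbb{E}|\check z_i(t)-\bar x_i^0(t)|^2$ and invoke \eqref{zzN1} and \eqref{zxi0} directly, so \eqref{Le1} is immediate. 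For \eqref{Le2}, observe that $z^{(N)}$ and $z^{(N-1)}$ differ by $O(1/\sqrt N)$ in $L^2$ by \eqref{zN1x0}, while $z^{(N-1)}$ is within $O(1/N^2)$ of $\check z^{(N-1)}$ by \eqref{zizi}, which in turn is within $O(1/N)$ of $x_0$ by \eqref{yN1Ex0}; chaining these via the triangle inequality gives $\sup_t \mathbb{E}|z^{(N)}(t)-x_0(t)|^2 = O(1/N)$.

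For \eqref{Le3} I would repeat the Cauchy--Schwarz argument already used in the proof of Lemma \ref{yN} for \eqref{e5}: bound $\mathbb{E}||z_i|^2-|\bar x_i^0|^2| \le \mathbb{E}|z_i-\bar x_i^0|^2 + 2\big(\mathbb{E}|\bar x_i^0|^2\big)^{1/2}\big(\mathbb{E}|z_i-\bar x_i^0|^2\big)^{1/2}$. This requires the uniform $L^2$-boundedness of $\bar x_i^0$, which follows from Proposition \ref{z111} (boundedness of $z_i$) together with \eqref{Le1}, or directly from the linear SDE \eqref{y4} with the $L^2$-bound \eqref{ubound} on the control $u_i$; combined with \eqref{Le1} this yields the $O(1/\sqrt N)$ rate. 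Finally, for \eqref{Le4} I would expand the difference $\mathcal{J}_i(u_i,\bar u_{-i}) - J_i(u_i)$ as in Lemma \ref{l8}: the two cost functionals differ only in that the running cost in (\textbf{PI}) involves $z_i - z^{(N)}$ and the terminal term $z_i(T)^2$, whereas in (\textbf{LPI}) they involve $\bar x_i^0 - x_0$ and $\bar x_i^0(T)^2$. Writing the difference of the quadratic running terms as a product of a sum and a difference, then applying Cauchy--Schwarz together with \eqref{Le1}, \eqref{Le2}, \eqref{Le3} and the uniform $L^2$-bounds on all the processes involved, gives the $O(1/\sqrt N)$ estimate.

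The only genuinely delicate point is keeping track of which quantities are $O(1/N)$ versus $O(1/\sqrt N)$ so that the quadratic (cost) estimates come out with the claimed $O(1/\sqrt N)$ rate rather than something worse; the $L^2$-state estimates themselves are $O(1/N)$, and the loss of a square root is exactly the Cauchy--Schwarz step turning an $L^2$-difference into an $L^1$-difference of squares. I expect the main obstacle to be not any single inequality but the bookkeeping of uniform (in $i$ and in $N$) $L^2$-bounds for $z_i, \check z_i, \bar x_i^0$ and their averages, which must be in hand before the Cauchy--Schwarz steps are legitimate; these bounds follow from Gronwall's inequality applied to the linear dynamics \eqref{y2}--\eqref{y7} using \eqref{ubound} and the boundedness of $P, \hat P, \Phi$ under (H1), so the argument is routine but needs to be invoked carefully. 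As with the earlier lemmas, the technical computations would be deferred to an appendix.
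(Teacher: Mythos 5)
Your proposal is correct and follows essentially the same route as the paper's Appendix E: deducing \eqref{Le1}--\eqref{Le2} by chaining the intermediate estimates of Proposition \ref{allz} through $\check z_i$ and $\check z^{(N-1)}$, and then obtaining \eqref{Le3}--\eqref{Le4} by the same Cauchy--Schwarz argument used for Lemma \ref{yN} and Lemma \ref{l8}, with the required $L^2$-bounds on $\bar x^0_i$ supplied by Proposition \ref{z111} and \eqref{ubound}. No gaps.
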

\begin{proof}
See Appendix E.
\end{proof}

\emph{Proof of Theorem \ref{main}:}
 Consider the $\epsilon$-Nash equilibrium for $\mathcal{A}_i$. Combining Lemma \ref{l8} and \ref{p3}, we have
\begin{equation}\nonumber\begin{aligned}
\mathcal{J}_i(\bar{u}_i,\bar{u}_{-i})&=J_i(\bar{u}_i)+O\Big(\frac{1}{\sqrt{N}}\Big)\\
&\leq J_i(u_i)+O\Big(\frac{1}{\sqrt{N}}\Big)\\
&=\mathcal{J}_i(u_i,\bar{u}_{-i})+O\Big(\frac{1}{\sqrt{N}}\Big).
\end{aligned}
\end{equation}
Thus, Theorem \ref{main} follows by taking $\epsilon=O\Big(\frac{1}{\sqrt{N}}\Big)$.

\section{Mean-Field LQG (MFLQG) Games with Partial Observation}

We consider a large population system with noisy observation as follows. The state $x_i$ for each $\mathcal{A}_{i}$ still satisfies the following linear stochastic system:
\begin{equation}\label{o1}
dx_i(t)=[A(t)x_i(t)+B(t)u_i(t)+\alpha x^{(N)}(t)+m(t)]dt+\sigma(t)dW_i(t)+\tilde{\sigma}(t)dW(t), \quad x_i(0)=x
\end{equation}
with $x^{(N)}(t)=\frac{1}{N}\sum_{i=1}^{N}x_{i}(t).$ We still assume the initial states $x_i(0)$ are deterministic and same for all agents. The $i^{th}$ agent $\mathcal{A}_i$ can access the following additive white-noise partial observation:\begin{equation}\label{o2}
dy_i(t)=[H(t)x_i(t)+\tilde{H}(t) x^{(N)}(t)+h(t)]dt+dV_{i}(t).
\end{equation}Here, $\{V_i\}_{1 \leq i \leq N}, \{W_i\}_{1 \leq i \leq N}, W$ are independent Brownian motions. Define the observable filtration of $\mathcal{A}_i$ as $\mathcal{G}^i_t:=\sigma\{y_i(s),W(s);0\leq s\leq t\}.$ In what follows, for simplicity, we focus on the one-dimensional case. The admissible control $u_i\in \mathcal{U}_i$ where the admissible control set $\mathcal{U}_i$ is defined as
$$
\mathcal{U}_i:=\{u_i(\cdot)|u_i(\cdot)\in L^{2}_{\mathcal{G}_t^{i}}(0, T; \mathbb{R})\},\ 1\leq i \leq N.
$$Let $u=(u_1, \cdots u_i, \cdots, u_{N})$ still denote the set of control strategies of all $N$ agents; $u_{-i}=(u_1, \cdots, u_{i-1},$ $u_{i+1}, \cdots u_{N})$ the control strategies set except the $i^{th}$ agent $\mathcal{A}_i.$ Introduce the cost functional of $\mathcal{A}_i$ as
\begin{align}\label{o3}
\mathcal{J}_i(u_i(\cdot),u_{-i}(\cdot))=\ &\mathbb{E}\left[\int_0^T \left(Q(t)\big(x_i(t)-x^{(N)}(t)\big)^2
 + R(t) u_i^2(t)\right) dt+G x_i^2(T)\right].
\end{align}For the coefficients of (\ref{o1}), \eqref{o2} and (\ref{o3}), we set the following assumption:
\begin{equation*}
(H2)\qquad
\begin{aligned}
H(\cdot),\tilde{H}(\cdot),h(\cdot)\in L^\infty(0,T;\mathbb{R}).
\end{aligned}
\end{equation*}Now, we formulate the large population LQG games with partial observation (\textbf{PO}).\\

\textbf{Problem (PO).}
Find a control strategies set $\bar{u}=(\bar{u}_1,\bar{u}_2,\cdots,\bar{u}_N)$ which satisfies
$$
\mathcal{J}_i(\bar{u}_i(\cdot),\bar{u}_{-i}(\cdot))=\inf_{u_i(\cdot)\in \mathcal{U}_i}\mathcal{J}_i(u_i(\cdot),\bar{u}_{-i}(\cdot))
$$where $\bar{u}_{-i}$ represents $(\bar{u}_1,\cdots,\bar{u}_{i-1},\bar{u}_{i+1},\cdots, \bar{u}_N)$.\

To study (\textbf{PO}) and obtain the decentralized individual strategies, a key step is to approximate the state-average. To this end, we define the state filter for the given filtration $\mathcal{G}^i_t$ as $$\hat{x}_i(t)=\mathbb{E}(x_i(t)|\mathcal{G}^i_t).$$ Then $\hat{x}^{(N)}(t)=\frac{1}{N}\sum_{i=1}^N\hat{x}_i(t)$ denotes the average of state filters. This is very different to the full-information analysis of large-population system in which we need only study the average of state itself. Applying the similar analysis as in Section 2, $x^{(N)}$ and $\hat{x}^{(N)}$ are approximated by $x_0$ and $\hat{x}$ respectively which are given by
\begin{equation}\label{o4}\left\{\begin{aligned}
dx_0(t)&=[\tilde{A}(t)x_0(t)+\tilde{B}(t)\hat{x}(t)+f(t)]dt+\tilde{\sigma}(t)dW(t),\\
d\hat{x}(t)&=[C(t)x_0(t)+D(t)\hat{x}(t)+g(t)]dt,\\
x_0(0)&=\hat{x}(0)=x
\end{aligned}\right.
\end{equation}
where $(\tilde{A}(\cdot),\tilde{B}(\cdot),C(\cdot),D(\cdot))\in L^2(0,T;\mathbb{R})$ and $(f(\cdot),g(\cdot))\in L^2_{\mathcal{G}_t^w}(0,T;\mathbb{R})$ are to be determined. It follows that the individual state filters in our partial observation case are no longer independent. This is different to the previous partial filtration case in Section 2, 3 where the state filters are independent thus only its expectation is needed. Therefore, here we need average all state filters by considering their conditional independence on filtration $\{\mathcal{G}^w_t\}_{0 \leq t \leq T}.$ Now, we introduce the limiting partial-observation state and limiting observation as follows:
\begin{equation}\label{o5}\left\{\begin{aligned}
&dx_i(t)=[A(t)x_i(t)+B(t)u_i(t)+\alpha x_0(t)+m(t)]dt +\sigma(t)dW_i(t)+\tilde{\sigma}(t)dW(t),\\
&x_i(0)=x
\end{aligned}\right.
\end{equation}
and \begin{equation}\label{o6}\begin{aligned}
&d\bar{y}_i(t)=[H(t)x_i(t)+\tilde{H}(t) x_0(t)+h(t)]dt+dV_{i}(t).
\end{aligned}
\end{equation}Note that the auxiliary state $x_i(\cdot)$ is different from that in \eqref{o1} although they apply the same notations. The limiting cost functional is given by
\begin{align}\label{o7}
J_i(u_i(\cdot))=\ &\mathbb{E}\left[\int_0^T \left(Q(t)\big(x_i(t)-x_0(t)\big)^2
 + R(t) u_i^2(t)\right) dt+G x_i^2(T)\right]
\end{align}where $x_0(\cdot)$ is given by \eqref{o4}. Now we formulate the following limiting partial observation (\textbf{LPO}) LQG game. \\

\textbf{Problem (LPO).} For the $i^{th}$ agent, $i=1,2,\cdots,N,$ find $\bar{u}_i(\cdot)\in \mathcal{U}_i$ satisfying
$$J_i(\bar{u}_i(\cdot))=\inf_{u_i(\cdot)\in \mathcal{U}_i}J_i(u_i(\cdot)).$$
Then $\bar{u}_i(\cdot)$ is called an optimal control for Problem (\textbf{LPO}).

As to the optimal control with partial observation (see \cite{Ben92}), we have
\begin{equation}\label{o14}\left\{\begin{aligned}
d\hat{x}_{i}(t)=&\big[A(t)\hat{x}_{i}(t)+B(t)u_i(t)+\alpha x_0(t)+m(t)\big]dt+P(t)H(t)dW(t)\\
&+P(t)H(t)\big[d\bar{y}_{i}(t)-(H(t)\hat{x}_i(t)+\tilde{H}(t) x_0(t)+h(t))dt\big],\\
\hat{x}_{i}(0)=&x
\end{aligned}\right.
\end{equation}
where $P(\cdot)\in L^{2}(0,T;\mathbb{R})$ is the unique solution of the Riccati equation
\begin{equation}\label{o15}\left\{\begin{aligned}
\dot{P}(t)&=2A(t)P(t)-H^2(t)P^2(t)+\sigma^2(t)+\tilde{\sigma}^2(t),\\
P(0)&=0.
\end{aligned}\right.
\end{equation}Now, we introduce some notations:
\begin{equation}\nonumber\begin{aligned}
&X_{i}(\cdot)=\left(
          \begin{array}{c}
            x_{i}(\cdot) \\
            x_0(\cdot) \\
            \hat{x}(\cdot) \\
          \end{array}
        \right),\hat{X}_{i}(\cdot)=\left(
          \begin{array}{c}
            \hat{x}_{i}(\cdot) \\
            x_0(\cdot) \\
            \hat{x}(\cdot) \\
          \end{array}
        \right),
        \mathbb{A}(\cdot)=\left(
                     \begin{array}{ccc}
                       A(\cdot) & \alpha & 0 \\
                       0 & \tilde{A}(\cdot) & \tilde{B}(\cdot) \\
                       0 & C(\cdot) & D(\cdot) \\
                     \end{array}
                   \right),\\
                   &\mathbb{B}(\cdot)=\left(
                                               \begin{array}{c}
                                                 B(\cdot) \\
                                                 0 \\
                                                 0 \\
                                               \end{array}
                                             \right),              \quad      \mathbb{F}(\cdot)=\left(
          \begin{array}{c}
            m(\cdot) \\
            f(\cdot) \\
            g(\cdot) \\
          \end{array}
        \right),\ \ \Sigma(\cdot)=\left(
                         \begin{array}{cc}
                           P(\cdot)H(\cdot) & P(\cdot)H(\cdot) \\
                           0 & \tilde{\sigma}(\cdot) \\
                           0 & 0 \\
                         \end{array}
                       \right),\\
         &\mathbb{Q}(\cdot)=\left(
                                                                    \begin{array}{ccc}
                                                                      Q(\cdot) & -Q(\cdot) & 0 \\
                                                                      -Q(\cdot) & Q(\cdot) & 0 \\
                                                                      0 & 0 & 0 \\
                                                                    \end{array}
                                                                  \right),                                                                  \mathbb{G }=\left(
                                                                                  \begin{array}{ccc}
                                                                                    G & 0 & 0 \\
                                                                                    0 & 0 & 0 \\
                                                                                    0 & 0 & 0 \\
                                                                                  \end{array}
                                                                                \right)
                                                                  , \mathbb{I}=\left(
                                                                                     \begin{array}{c}
                                                                                       1 \\
                                                                                       1 \\
                                                                                       1 \\
                                                                                     \end{array}
                                                                                   \right).
\end{aligned}
\end{equation}
Introduce the innovation process
\begin{equation}\nonumber\begin{aligned}
I_i(t):=\bar{y}_{i}(t)-\int_0^t[H(s)\hat{x}_i(s)+\tilde{H}(s) x_0(s)+h(s)]ds.
\end{aligned}
\end{equation}
Then the system \eqref{o4}, \eqref{o5} and \eqref{o7} can be rewritten as
\begin{equation}\label{o151}\begin{aligned}
d\hat{X}_{i}(t)=&\big[\mathbb{A}(t)\hat{X}_{i}(t)+\mathbb{B}(t)u_i(t)+\mathbb{F}(t)\big]dt+\Sigma(t)\left(
                                                                                                      \begin{array}{c}
                                                                                                        dI_i(t) \\
                                                                                                        dW(t) \\
                                                                                                      \end{array}
                                                                                                    \right)
,\ \hat{X}_{i}(0)=x\mathbb{I}
\end{aligned}
\end{equation}
with
\begin{equation}\label{o152}\begin{aligned}
J_i(u_i(\cdot))=\ \mathbb{E}\left[\int_0^T \left(X'_{i}(t)\mathbb{Q}(t)X_{i}(t) + R(t) u_i^2(t)\right) dt+X'_{i}(T)\mathbb{G}X_{i}(T)\right].
\end{aligned}
\end{equation}
Given two functions $\pi(\cdot)\in L^\infty(0,T; \mathbb{R}^{3\times3})$ and $\gamma(\cdot)\in L^\infty_{\mathcal{G}^w_t}(0,T; \mathbb{R}^{3\times1})$ satisfying
\begin{equation}\label{o16}\left\{\begin{aligned}
&\dot{\pi}(t)+\pi(t)\mathbb{A}(t)+\mathbb{A}'(t)\pi(t)-R^{-1}(t)\pi(t)\mathbb{B}(t)\mathbb{B}'(t)\pi(t)+\mathbb{Q}(t)=0,\\
&\pi(T)=\mathbb{G}
\end{aligned}\right.
\end{equation}
and
\begin{equation}\label{o17}\left\{\begin{aligned}
&d\gamma(t)+[(\mathbb{A}'(t)-R^{-1}(t)\pi(t)\mathbb{B}(t)\mathbb{B}'(t))\gamma(t)+\pi(t)\mathbb{F}(t)]dt+q(t)dW(t)=0,\\
&\gamma(T)=0
\end{aligned}\right.
\end{equation}
respectively. We have
\begin{lemma}
Let \emph{(H1)-(H2)} hold. Assume \eqref{o16} and \eqref{o17} admit unique solutions $\pi(\cdot)$ and $(\gamma(\cdot),q(\cdot))$, then if the optimal control of Problem \emph{(\textbf{LPO})} exist, it can be represented by $$\bar{u}_i(t)=-R^{-1}(t)\mathbb{B}'(t)\pi(t)\hat{X}_{i}(t)-R^{-1}(t)\mathbb{B}'(t)\gamma(t).$$
\end{lemma}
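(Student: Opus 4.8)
The plan is to solve Problem (\textbf{LPO}) as a standard LQG problem with \emph{partial observation} by reducing it, via the filtering equation \eqref{o14}, to a \emph{completely observable} LQG problem in the enlarged state $\hat{X}_i(\cdot)$ whose dynamics \eqref{o151} is driven by the innovation process $I_i(\cdot)$ and the common noise $W(\cdot)$, both of which are $\mathcal{G}^i_t$-adapted. The key observation is that because the running and terminal costs in \eqref{o152} are written in terms of $X_i$ rather than $\hat{X}_i$, one first uses the tower property and the fact that $x_i(t)-\hat{x}_i(t)$ is orthogonal to $\mathcal{G}^i_t$ to replace $\mathbb{E}[X_i'(t)\mathbb{Q}(t)X_i(t)]$ by $\mathbb{E}[\hat{X}_i'(t)\mathbb{Q}(t)\hat{X}_i(t)]$ plus a term depending only on the deterministic Riccati solution $P(\cdot)$ and hence independent of the control; similarly for the terminal term. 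After this reduction the cost functional becomes a genuine quadratic functional of $(\hat{X}_i,u_i)$ up to a control-independent constant, so minimizing $J_i$ over $\mathcal{U}_i$ is equivalent to minimizing the reduced functional.

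Next I would apply the stochastic maximum principle (or a completion-of-squares argument) to the completely observed control problem $(\hat{X}_i,u_i)$. Introduce the adjoint BSDE associated with \eqref{o151}--\eqref{o152}; by the linear-quadratic structure, guess that the adjoint process is affine in the state, $p_i(t)=\pi(t)\hat{X}_i(t)+\gamma(t)$, with $\pi(\cdot)$ and $(\gamma(\cdot),q(\cdot))$ to be identified. Apply It\^o's formula to $\pi(t)\hat{X}_i(t)+\gamma(t)$ along \eqref{o151}, substitute into the adjoint equation, and match the $dt$-coefficient that is linear in $\hat{X}_i$ and the remaining $dt$-coefficient: the former yields precisely the matrix Riccati equation \eqref{o16} with terminal condition $\pi(T)=\mathbb{G}$, and the latter yields the linear BSDE \eqref{o17} for $(\gamma,q)$ with $\gamma(T)=0$. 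The first-order (stationarity) condition in $u_i$ of the Hamiltonian, $R(t)u_i(t)+\mathbb{B}'(t)p_i(t)=0$, then gives $\bar{u}_i(t)=-R^{-1}(t)\mathbb{B}'(t)\pi(t)\hat{X}_i(t)-R^{-1}(t)\mathbb{B}'(t)\gamma(t)$, which is the claimed feedback form. Convexity of the cost (guaranteed by $R(\cdot)>0$, $Q(\cdot)\ge 0$, $G\ge 0$ from (H1)) makes the stationarity condition sufficient, so any optimal control must coincide with this expression.

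The main obstacle I expect is the measurability/adaptedness bookkeeping in the reduction step rather than the algebra. One must be careful that the admissible controls live in $L^2_{\mathcal{G}^i_t}$, that the innovation $I_i$ is a $\mathcal{G}^i_t$-Brownian motion (or at least a $\mathcal{G}^i_t$-martingale with the right quadratic variation) even though, as the authors emphasize, it is not ``classical'' because $x_0,\hat{x}$ are themselves random and $\mathcal{G}^w_t$-adapted through $f,g$; and that the cross terms arising when expanding $X_i'\mathbb{Q}X_i$ around $\hat{X}_i$ genuinely drop out after conditioning. A secondary point is to verify that $\gamma(\cdot)$ is $\mathcal{G}^w_t$-adapted (so that the feedback is implementable), which follows because $\mathbb{F}(\cdot)$ has $\mathcal{G}^w_t$-adapted components $f,g$ and \eqref{o17} is a BSDE driven only by $W$. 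Once these adaptedness facts are in place, the derivation of \eqref{o16}--\eqref{o17} and of the feedback representation is routine It\^o calculus and coefficient matching, exactly parallel to the computation already carried out for Problem (\textbf{LPF}) in the proof of Theorem \ref{PP2}.
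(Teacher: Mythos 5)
Your proposal is correct and is essentially the argument the paper itself relies on: the paper omits the proof entirely, deferring to Bensoussan \cite{Ben92}, and the standard separation-principle argument given there is precisely your reduction to the completely observed problem in $\hat{X}_i$ (using orthogonality of $X_i-\hat{X}_i$ to $\mathcal{G}^i_t$ so the estimation-error term is control-independent), followed by the affine ansatz $p_i=\pi\hat{X}_i+\gamma$, coefficient matching to obtain \eqref{o16}--\eqref{o17}, and the stationarity condition $R u_i+\mathbb{B}'p_i=0$. The adaptedness caveats you flag (the non-classical innovation and the $\mathcal{G}^w_t$-adaptedness of $\gamma$, with $q\equiv 0$) are exactly the points the paper addresses separately after the lemma, so nothing is missing.
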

The proof can be seen in \cite{Ben92} and omitted. Now, we aim to derive the consistency condition satisfied by the decentralized control policy. From \eqref{o16}, we get that $\pi(\cdot)$ is symmetric and denote $(\pi(\cdot),\gamma(\cdot))$ as $$\pi(\cdot)=\left(
                              \begin{array}{ccc}
                                \pi_{11}(\cdot) & \pi_{12}(\cdot) & \pi_{13}(\cdot) \\
                                \pi_{12}(\cdot) & \pi_{22}(\cdot) & \pi_{23}(\cdot) \\
                                \pi_{13}(\cdot) & \pi_{23}(\cdot) & \pi_{33}(\cdot) \\
                              \end{array}
                            \right),\qquad
\gamma(\cdot)=\left(
                                                 \begin{array}{c}
                                                   \gamma_1(\cdot) \\
                                                   \gamma_2(\cdot) \\
                                                   \gamma_3(\cdot) \\
                                                 \end{array}
                                               \right).
$$
Then the optimal control $\bar{u}_i(t)$ is rewritten as
\begin{equation}\label{o18}\begin{aligned}
\bar{u}_i(t)=-R^{-1}(t)B(t)\big(\pi_{11}(t)\hat{x}_i(t)+\pi_{12}(t)x_0(t)+\pi_{13}\hat{x}(t)+\gamma_1(t)\big).
\end{aligned}
\end{equation}
Plugging \eqref{o18} into \eqref{o1}, we have
\begin{equation}\nonumber
\begin{aligned}
dx_i(t)=&[A(t)x_i(t)-B^2(t)R^{-1}(t)\big(\pi_{11}(t)\hat{x}_i(t)+\pi_{12}(t)x_0(t)+\pi_{13}\hat{x}(t)+\gamma_1(t)\big)\\
&+\alpha x^{(N)}(t)+m(t)]dt +\sigma(t)dW_i(t)+\tilde{\sigma}(t)dW(t).
\end{aligned}
\end{equation}
Taking summation, dividing by $N$ and letting $N\rightarrow+\infty$, we obtain
\begin{equation}\label{o181}
\begin{aligned}
dx_0(t)=&\Big[\big(A(t)+\alpha-B^2(t)R^{-1}(t)\pi_{12}(t)\big)x_0(t)-B^2(t)R^{-1}(t)\big(\pi_{11}(t)+\pi_{13}(t)\big)\hat{x}(t)\\
&-B^2(t)R^{-1}(t)\gamma_1(t)+m(t)\Big]+\tilde{\sigma}(t)dW(t).
\end{aligned}
\end{equation}
Comparing the coefficients with \eqref{o4}, we have
\begin{equation}\label{o19}\left\{\begin{aligned}
&\tilde{A}(t)=A(t)+\alpha-B^2(t)R^{-1}(t)\pi_{12}(t),\\
&\tilde{B}(t)=-B^2(t)R^{-1}(t)\big(\pi_{11}(t)+\pi_{13}(t)\big),\\
&f(t)=-B^2(t)R^{-1}(t)\gamma_1(t)+m(t).
\end{aligned}\right.
\end{equation}
From \eqref{o14} and \eqref{o18} it follows that
\begin{equation}\nonumber\begin{aligned}
d\hat{x}_{i}(t)=&\Big[A(t)\hat{x}_{i}(t)-B^2(t)R^{-1}(t)\big(\pi_{11}(t)\hat{x}_i(t)+\pi_{12}(t)x_0(t)+\pi_{13}\hat{x}(t)+\gamma_1(t)\big)+\alpha x_0(t)+m(t)\Big]dt\\
&+P(t)H(t)dW(t)+P(t)H(t)\Big[\big(H(t)x_i(t)-H(t)\hat{x}_i(t)\big)dt+dV_i(t)\Big].
\end{aligned}
\end{equation}
Taking summation, dividing by $N$ and letting $N\rightarrow+\infty$, we get
\begin{equation}\label{o20}\begin{aligned}
d\hat{x}(t)=&\Big[\Big(A(t)-B^2(t)R^{-1}(t)\big(\pi_{11}(t)+\pi_{13}(t)\big)-P(t)H^2(t)\Big)\hat{x}(t)-B^2(t)R^{-1}(t)\gamma_1(t)+m(t)\\
&+\Big(\alpha-B^2(t)R^{-1}(t)\pi_{12}(t)+P(t)H^2(t)\Big)x_0(t)\Big]dt+P(t)H(t)dW(t).
\end{aligned}
\end{equation}
Comparing the coefficients with \eqref{o4}, we obtain
\begin{equation}\label{o21}\left\{\begin{aligned}
&C(t)=\alpha-B^2(t)R^{-1}(t)\pi_{12}(t)+P(t)H^2(t),\\
&D(t)=A(t)-B^2(t)R^{-1}(t)\big(\pi_{11}(t)+\pi_{13}(t)\big)-P(t)H^2(t),\\
&g(t)=-B^2(t)R^{-1}(t)\gamma_1(t)+m(t).
\end{aligned}\right.
\end{equation}
Thus, noting $f(\cdot),g(\cdot)$ and $\gamma(T)=0$ in \eqref{o17}, we get that $\gamma(\cdot)$ is deterministic. Then the uniqueness of BSDE implies $q(\cdot)\equiv0.$ Then we have
\begin{theorem}
Suppose \emph{(H1)-(H2)} hold true and the following Riccati equation system
\begin{equation}\label{o22}\left\{\begin{aligned}
&\dot{P}(t)-2A(t)P(t)+H^2(t)P^2(t)-(\sigma^2(t)+\tilde{\sigma}^2(t))=0,\ P(0)=0,\\
&\dot{\pi}(t)+\pi(t)\mathbb{A}(t)+\mathbb{A}'(t)\pi(t)-R^{-1}\pi(t)\mathbb{B}(t)\mathbb{B}'(t)\pi(t)+\mathbb{Q}(t)=0,\ \pi(T)=\mathbb{G},\\
&\dot{\gamma}(t)+\big(\mathbb{A}'(t)-R^{-1}(t)\pi(t)\mathbb{B}(t)\mathbb{B}'(t)\big)\gamma(t)+\pi(t)\mathbb{F}(t)=0,\ \gamma(T)=0
\end{aligned}\right.
\end{equation}admits an unique solution $(P(\cdot),\pi(\cdot),\gamma(\cdot))$
and $(\tilde{A}, \tilde{B}, C, D, f, g)$ are uniquely determined by \eqref{o19} and \eqref{o21}. Moreover, the SDE system
\begin{equation}\label{o23}\left\{\begin{aligned}
&dx_0(t)=\Big[\big(A(t)+\alpha-B^2(t)R^{-1}(t)\pi_{12}(t)\big)x_0(t)-B^2(t)R^{-1}(t)\big(\pi_{11}(t)+\pi_{13}(t)\big)\hat{x}(t)\\
&\qquad\quad-B^2(t)R^{-1}(t)\gamma_1(t)+m(t)\Big]+\tilde{\sigma}(t)dW(t),\\
&d\hat{x}(t)=\Big[\Big(\alpha-B^2(t)R^{-1}(t)\pi_{12}(t)+P(t)H^2(t)\Big)x_0(t)-B^2(t)R^{-1}(t)\gamma_1(t)+m(t)\\
&\qquad\quad+\Big(A(t)-B^2(t)R^{-1}(t)\big(\pi_{11}(t)+\pi_{13}(t)\big)-P(t)H^2(t)\Big)\hat{x}(t)\Big]dt+P(t)H(t)dW(t),\\
&x_0(0)=\hat{x}(0)=x
\end{aligned}\right.
\end{equation}admits an unique solution $(x_0(\cdot),\hat{x}(\cdot))$ and the decentralized optimal strategies are given by \eqref{o18}.\end{theorem}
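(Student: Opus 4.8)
The plan is to assemble the ingredients already prepared in this section: the optimal‑control representation for Problem (\textbf{LPO}), the coefficient‑matching that produces \eqref{o19} and \eqref{o21}, the observation that $\gamma$ is deterministic, and finally a well‑posedness argument for the coupled Riccati/SDE system. First I would invoke the preceding lemma (whose proof is referred to \cite{Ben92}): under solvability of \eqref{o16}--\eqref{o17}, the optimal control of Problem (\textbf{LPO}), when it exists, is $\bar u_i(t)=-R^{-1}(t)\mathbb{B}'(t)\pi(t)\hat X_i(t)-R^{-1}(t)\mathbb{B}'(t)\gamma(t)$; exploiting the block structure of $\mathbb{B}$, $\pi$ and $\gamma$ this is precisely the feedback form \eqref{o18}, which depends only on $\hat x_i$, $x_0$, $\hat x$.

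Next I would derive the consistency relations by substitution. Plugging \eqref{o18} into the original state dynamics \eqref{o1}, summing over $i$, dividing by $N$ and letting $N\to\infty$, the idiosyncratic terms $\frac{1}{N}\sum_i\sigma\,dW_i$ vanish in $L^2$, while, using the conditional independence of $(x_i,\hat x_i)$ given the common noise $W$ (equivalently given $\mathcal{G}^w_t$) together with a conditional law of large numbers, $\frac{1}{N}\sum_i x_i\to x_0$ and $\frac{1}{N}\sum_i\hat x_i\to\hat x$; this produces \eqref{o181}, and matching with the first line of \eqref{o4} gives \eqref{o19}. Carrying out the same averaging on the Kalman filter equation \eqref{o14} after inserting \eqref{o18} — now additionally using $\frac{1}{N}\sum_i H(x_i-\hat x_i)\to H(x_0-\hat x)$ and $\frac{1}{N}\sum_i dV_i\to0$ — yields \eqref{o20}, and matching with the second line of \eqref{o4} gives \eqref{o21}. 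Since by \eqref{o19} and \eqref{o21} the functions $f$ and $g$ are affine in $\gamma_1$ with deterministic coefficients and $m$ is deterministic, $\mathbb{F}$ is deterministic; then the BSDE \eqref{o17} with deterministic generator and terminal value $\gamma(T)=0$ has a deterministic solution with $q\equiv0$, which closes the loop and reduces \eqref{o17} to the linear terminal‑value ODE listed as the third line of \eqref{o22}.

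For the well‑posedness and uniqueness claims I would argue as follows. The scalar $P(\cdot)$ is the unique nonnegative, bounded solution of the filtering Riccati equation \eqref{o15}, i.e.\ the first line of \eqref{o22}, by standard results. Given $P$, the entries of $\mathbb{A}(\cdot)$ are, through \eqref{o19}--\eqref{o21}, affine functions of $\pi_{11},\pi_{12},\pi_{13}$, so the second line of \eqref{o22} is a self‑contained matrix Riccati‑type equation in $\pi$ whose unique solvability is exactly the hypothesis; $\gamma$ is then the unique solution of a linear ODE with bounded coefficients. Consequently $(\tilde A,\tilde B,C,D,f,g)$ are uniquely determined by \eqref{o19}--\eqref{o21} and are essentially bounded, so \eqref{o23} is a two‑dimensional linear SDE with bounded coefficients driven by $W$, hence has a unique strong solution $(x_0,\hat x)$; the decentralized strategies \eqref{o18} are then well defined.

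The step I expect to be the main obstacle is the rigorous passage to the limit $N\to\infty$ in the averaged filter equations: unlike the partial‑filtration case of Sections 2--3, the individual filters $\hat x_i$ are coupled (each depends on $x^{(N)}$ through its observation \eqref{o2}), so one must first replace them by their limiting counterparts from \eqref{o5}--\eqref{o6}, estimate the resulting error uniformly in $i$, and only then apply a conditional law of large numbers given $\mathcal{G}^w_t$ — and the innovation process $I_i$ and the associated Kalman equation are, as noted in the abstract, not classical, which is what makes this justification delicate. A secondary subtlety is the self‑referential structure of the $\pi$‑equation in \eqref{o22}, whose drift matrix $\mathbb{A}$ depends on the unknown $\pi$; this is why the statement is conditional on the solvability of \eqref{o22}, and a sharper treatment would exploit the block‑triangular form of $\mathbb{A}$ together with a continuation/fixed‑point argument to give sufficient conditions.
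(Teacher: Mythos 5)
Your proposal follows essentially the same route as the paper: the paper's "proof" consists precisely of the preceding derivation (substituting the feedback \eqref{o18} into the state and filter dynamics, averaging, passing to the limit, and matching coefficients with \eqref{o4} to obtain \eqref{o19} and \eqref{o21}, then noting $\gamma$ is deterministic so $q\equiv 0$), after which the remaining well-posedness and $\epsilon$-Nash claims are declared "similar to Section 3 and omitted." Your additional remarks on the conditional law of large numbers and on the self-referential dependence of $\mathbb{A}$ on $\pi$ correctly identify the points the paper leaves implicit, but they do not change the argument.
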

The proof is similar to that of Section 3 and omitted.
\begin{remark}
\emph{\quad Similar to Section 3, the resultant $(\bar{u}_1,\bar{u}_2,\cdots,\bar{u}_N)$ satisfies the $\epsilon$-Nash equilibrium for Problem {(\textbf{PO})}. The equation system \eqref{o22}-\eqref{o23} combined will be called the consistency condition. This is similar to the analysis in \cite{H10} and we know the solvability of the consistency condition \eqref{o22}-\eqref{o23} is essentially reduced to the solvability of \eqref{o22}. Once a consistent solution to \eqref{o22} is obtained, a consistent solution to SDEs \eqref{o23} is guaranteed.}
\end{remark}The authors would like to thank the editors and reviewers for their valuable and efficient work.\\

\section{Appendix}
\textbf{Appendix A. Proof of Lemma \ref{xNhat}.} By \eqref{DHS} and \eqref{X4}, we have
\begin{equation}\nonumber\left\{\begin{aligned}
&d\Big(\hat{\bar{x}}^{(N)}(t)-\mathbb{E}x_0(t)\Big)=\Big(A(t)-B^2(t)R^{-1}(t)P(t)\Big)\Big(\hat{\bar{x}}^{(N)}(t)-\mathbb{E}x_0(t)\Big)dt\\
&\qquad\qquad\qquad\qquad +\frac{1}{N}\sigma(t)\sum_{i=1}^NdW_i(t),\\
&\hat{\bar{x}}^{(N)}(0)-\mathbb{E}x_0(0)=x^{(N)}(0)-x.
\end{aligned}\right.
\end{equation}
Thus
\begin{equation}\nonumber\begin{aligned}
\Big|\hat{\bar{x}}^{(N)}(t)-\mathbb{E}x_0(t)\Big|^2\leq &3\Big|x^{(N)}(0)-x\Big|^2+3\int_0^t\Big|A(s)-B^2(s)R^{-1}(s)P(s)\Big|^2\\
&\cdot\Big|\hat{\bar{x}}^{(N)}(s)-\mathbb{E}x_0(s)\Big|^2ds+3\Big|\int_0^t\frac{1}{N}\sigma(s)\sum_{i=1}^NdW_i(s)\Big|^2.
\end{aligned}
\end{equation}By the independence of $\{W_i(t)\}_{t\ge 0},1\le i\le N$, we have
\begin{equation}\nonumber\begin{aligned}
\mathbb{E}\Big|\int_0^t\frac{1}{N}\sigma(s)\sum_{i=1}^NdW_i(s)\Big|^2=O\Big(\frac{1}{N}\Big).
\end{aligned}
\end{equation}So \eqref{e1} follows by Gronwall's inequality. Combining with \eqref{e1}, the assertion \eqref{e2} can be proved in a similar way.   \hfill  $\Box$\\

\textbf{Appendix B. Proof of Lemma \ref{l8}.} Similar to the proof of Lemma \ref{yN}, we have
\begin{equation}\nonumber\begin{aligned}
&\sup_{0\leq t\leq T}\mathbb{E}\Big||y_i(t)-y^{(N)}(t)|^2-|\bar{x}_i(t)-x_0(t)|^2\Big|\\
\leq&\sup_{0\leq t\leq T}\mathbb{E}\Big|y_i(t)-\bar{x}_i(t)+x_0(t)-y^{(N)}(t)\Big|^2\\
+&2\Big(\sup_{0\leq t\leq T}\mathbb{E}|\bar{x}_i(t)-x_0(t)|^2\Big)^{\frac{1}{2}}\Big(\sup_{0\leq t\leq T}\mathbb{E}|y_i(t)-\bar{x}_i(t)+x_0(t)-y^{(N)}(t)|^2\Big)^{\frac{1}{2}}.
\end{aligned}
\end{equation}By Lemma \ref{yN}, it follows that
\begin{equation}\nonumber\begin{aligned}
&\sup_{0\leq t\leq T}\mathbb{E}\Big|y_i(t)-\bar{x}_i(t)+x_0(t)-y^{(N)}(t)\Big|^2\\\leq& \sup_{0\leq t\leq T}\mathbb{E}\Big|y_i(t)-\bar{x}_i(t)\Big|^2+\sup_{0\leq t\leq T}\mathbb{E}\Big|y^{(N)}(t)-x_0(t)\Big|^2\\
=&O\Big(\frac{1}{N}\Big).
\end{aligned}
\end{equation}Then, combining with the fact that $\sup\limits_{0\leq t\leq T}\mathbb{E}|\bar{x}_i(t)-x_0(t)|^2<+\infty$, we obtain
\begin{equation}\nonumber\begin{aligned}
\sup_{0\leq t\leq T}\mathbb{E}\Big||y_i(t)-y^{(N)}(t)|^2-|\bar{x}_i(t)-x_0(t)|^2\Big|=O\Big(\frac{1}{\sqrt{N}}\Big).
\end{aligned}
\end{equation}Thus,\begin{equation}\nonumber\begin{aligned}
&\Big|\mathcal{J}_i(\bar{u}_i,\bar{u}_{-i})-J_i(\bar{u}_i)\Big|\\
\leq&\mathbb{E}\int_0^T\Big|Q(t)(y_i(t)-y^{(N)}(t))^2-Q(t)(\bar{x}_i(t)-x_0(t))^2\Big|dt+\mathbb{E}\Big|Gy_i^2(T)-G\bar{x}_i^2(T)\Big|\\
=&O\Big(\frac{1}{\sqrt{N}}\Big).
\end{aligned}
\end{equation}The assertion \eqref{e6} follows.   \hfill  $\Box$\\

\textbf{Appendix C. Proof of Proposition \ref{z111}.} By \eqref{y2} and \eqref{y3}, it holds that
\begin{equation}\nonumber\begin{aligned}
|z_i(t)|^2\leq& 4|x_i(0)|^2+4K\int_0^t\Big[|z_i(s)|^2+|u_i(s)|^2+\frac{1}{N}\sum_{k=1}^N|z_k(s)|^2+|m(s)|^2\Big]ds\\
&+4\Big|\int_0^t\sigma(s)dW_i(s)\Big|^2+4\Big|\int_0^t\tilde{\sigma}(s)dW(s)\Big|^2
\end{aligned}
\end{equation}
and for $j\neq i$,
\begin{equation}\nonumber\begin{aligned}
|z_j(t)|^2\leq& 4|x_j(0)|^2+4K\int_0^t\Big[|z_j(s)|^2+|\bar{u}_j(s)|^2+\frac{1}{N}\sum_{k=1}^N|z_k(s)|^2+|m(s)|^2\Big]ds\\
&+4\Big|\int_0^t\sigma(s)dW_j(s)\Big|^2+4\Big|\int_0^t\tilde{\sigma}(s)dW(s)\Big|^2
\end{aligned}
\end{equation}where $K:=\max\limits_{0\leq t\leq T}\Big(A^2(t)+B^2(t)\Big)+\alpha^2$. Thus,
\begin{equation}\nonumber\begin{aligned}
\mathbb{E}\Big[\sum_{k=1}^N|z_k(t)|^2\Big]\leq & 4\mathbb{E}\Big[\sum_{k=1}^N|x_k(0)|^2\Big]+4K\mathbb{E}\int_0^t\Big[2\sum_{k=1}^N|z_k(s)|^2+|u_i(s)|^2+\sum_{k=1,k\neq i}^N|\bar{u}_k(s)|^2\\
&+N|m(s)|^2\Big]ds+4\sum_{k=1}^N\mathbb{E}\Big|\int_0^t\sigma(s)dW_k(s)\Big|^2+4N\mathbb{E}\Big|\int_0^t\tilde{\sigma}(s)dW(s)\Big|^2.
\end{aligned}
\end{equation}By \eqref{ubound}, we can see that $\mathbb{E}|u_i(t)|^2$ is bounded. Besides, the optimal controls $\bar{u}_k(t),k\neq i$ are $L^2$-bounded. Then by Gronwall's inequality, it follows that
\begin{equation}\nonumber
\sup_{0\leq t\leq T}\mathbb{E}\Big[\sum_{k=1}^N|z_k(t)|^2\Big]=O(N),
\end{equation}
and $\sup\limits_{0\leq t\leq T}\mathbb{E}|z_i(t)|^2$ is bounded.  \hfill $\Box$\\

\textbf{Appendix D. Proof of Proposition \ref{allz}.} From \eqref{y7}, it follows that
\begin{equation}\label{zN}\left\{\begin{aligned}
d\check{z}^{(N-1)}(t)=&\Big[(A(t)+\frac{N-1}{N}\alpha)\check{z}^{(N-1)}(t)-B^2(t)R^{-1}(t)\\
&\cdot\Big(P(t)\hat{\bar{x}}^{(N-1)}(t)+\hat{P}(t)\mathbb{E}x_0(t)+\Phi(t)\Big)+m(t)\Big]dt\\
&+\frac{1}{N-1}\sum_{j=1,j\neq i}^N\sigma(t)dW_j(t)+\tilde{\sigma}(t)dW(t),\\
\check{z}^{(N-1)}(0)=&x^{(N-1)}(0).
\end{aligned}\right.
\end{equation}
Then we have
\begin{equation}\nonumber\left\{\begin{aligned}
&d\Big(\check{z}^{(N-1)}(t)-z^{(N-1)}(t)\Big)=\Big[(A(t)+\frac{N-1}{N}\alpha)\Big(\check{z}^{(N-1)}(t)-z^{(N-1)}(t)\Big)-\frac{\alpha}{N}z_i(t)\Big]dt,\\
&\check{z}^{(N-1)}(0)-z^{(N-1)}(0)=0.
\end{aligned}\right.
\end{equation}
By the $L^2$-boundness of $z_i(t)$ and Gronwall's inequality, the assertions \eqref{zN1x0} and \eqref{zizi} hold.
And by \eqref{e1}, we can get \eqref{zzN}. Besides, it follows that
\begin{equation}\nonumber
\sup_{0\leq t\leq T}\mathbb{E}\Big|\check{z}^{(N-1)}(t)-x_0(t)\Big|^2=O\Big(\frac{1}{N}\Big)
\end{equation}
 from \eqref{zN}, \eqref{DHS} and \eqref{zzN}. Then \eqref{y4}, \eqref{y6} and \eqref{yN1Ex0} imply that
\begin{equation}\nonumber
\sup_{0\leq t\leq T}\mathbb{E}\Big|\check{z}_i(t)-\bar{x}^0_i(t)\Big|^2=O\Big(\frac{1}{N}\Big).
\end{equation}
Finally, by Proposition \ref{z111}, we easily get \eqref{zzN1}.   \hfill  $\Box$\\

\textbf{Appendix E. Proof of Lemma \ref{p3}.}  \eqref{Le1} and \eqref{Le2} follow from Proposition \ref{allz} directly. By Proposition \ref{z111}, we get that both $\sup_{0\leq t\leq T}\mathbb{E}|\bar{x}^0_i(t)|^2$ and $\sup_{0\leq t\leq T}\mathbb{E}|\bar{x}^0_i(t)-x_0(t)|^2$ are bounded. Similar to the proof of Lemma \ref{yN}, we have
\begin{equation}\nonumber\begin{aligned}
&\sup_{0\leq t\leq T}\mathbb{E}\Big||z_i(t)|^2-|\bar{x}^0_i(t)|^2\Big|\\
\leq& \sup_{0\leq t\leq T}\mathbb{E}|z_i(t)-\bar{x}^0_i(t)|^2+2\sup_{0\leq t\leq T}\mathbb{E}|\bar{x}^0_i(t)(z_i(t)-\bar{x}^0_i(t))|\\
\leq&\sup_{0\leq t\leq T}\mathbb{E}|z_i(t)-\bar{x}^0_i(t)|^2+2\Big(\sup_{0\leq t\leq T}\mathbb{E}|\bar{x}^0_i(t)|^2\Big)^{\frac{1}{2}}\Big(\sup_{0\leq t\leq T}\mathbb{E}|z_i(t)-\bar{x}^0_i(t)|^2\Big)^{\frac{1}{2}}\\
=&O\Big(\frac{1}{\sqrt{N}}\Big),
\end{aligned}
\end{equation}
thus \eqref{Le3} holds. Besides,
\begin{equation}\nonumber\begin{aligned}
&\sup_{0\leq t\leq T}\mathbb{E}\Big||z_i(t)-z^{(N)}(t)|^2-|\bar{x}^0_i(t)-x_0(t)|^2\Big|\\
\leq&\sup_{0\leq t\leq T}\mathbb{E}\Big|z_i(t)-\bar{x}^0_i(t)+x_0(t)-z^{(N)}(t)\Big|^2\\
&+2\Big(\sup_{0\leq t\leq T}\mathbb{E}|\bar{x}^0_i(t)-x_0(t)|^2\Big)^{\frac{1}{2}}\Big(\sup_{0\leq t\leq T}\mathbb{E}|z_i(t)-\bar{x}^0_i(t)+x_0(t)-z^{(N)}(t)|^2\Big)^{\frac{1}{2}}\\
=&O\Big(\frac{1}{\sqrt{N}}\Big),
\end{aligned}
\end{equation}
then \begin{equation}\nonumber\begin{aligned}
&\Big|\mathcal{J}_i(u_i,\bar{u}_{-i})-J_i(u_i)\Big|\\
\leq&\mathbb{E}\int_0^T\Big|Q(t)(z_i(t)-z^{(N)}(t))^2-Q(t)(\bar{x}^0_i(t)-x_0(t))^2\Big|dt+\mathbb{E}\Big|Gz_i^2(T)-G(\bar{x}^0_i(T))^2\Big|\\
=&O\Big(\frac{1}{\sqrt{N}}\Big),
\end{aligned}
\end{equation}
which implies \eqref{Le4}.  \hfill  $\Box$

\end{document}